\documentclass[11pt]{article}
\usepackage[margin=1in]{geometry}
\usepackage{enumitem}
\usepackage{amsmath,amssymb,amsthm,nicefrac,hyperref}
\usepackage{thmtools}
\usepackage[svgnames]{xcolor}
\usepackage{listings}
\usepackage{setspace}
\usepackage{authblk}
\usepackage[utf8]{inputenc}
\newcommand{\QQ}{\mathbb{Q}}
\newcommand{\ZZ}{\mathbb{Z}}
\newcommand{\RR}{\mathbb{R}}

\newcommand{\CC}{\mathbb{C}}

\renewcommand{\phi}{\varphi}

\renewcommand{\epsilon}{\varepsilon}

\newcommand{\norm}[1]{\|{#1}\|}

\newcommand{\ip}[1]{\langle {#1} \rangle}

\newcommand{\diag}{\mathrm{diag}}
\newcommand{\spec}{\mathrm{spec}}
\newcommand{\tnorm}[1]{\big| {#1} \big |_{\mathbb{R}/\mathbb{Z}}}

\newtheorem{theorem}{Theorem}[section]
\newtheorem{lemma}[theorem]{Lemma}
\newtheorem{proposition}[theorem]{Proposition}

\newtheorem{conjecture}[theorem]{Conjecture}

\title{Uniform mixing and $\epsilon$-uniform mixing on cycles}
\author[a]{Xiwang Cao}
\author[b]{Cuiwen Zhu}
\affil[a]{Department of Mathematics, Nanjing University of Aeronautics and Astronautics, Nanjing 210016, China}
\affil[b]{Department of Combinatorics \& Optimization, University of Waterloo, Waterloo, ON N2L 3G1, Canada}
\date{July 1, 2026}

\begin{document}

\maketitle

\begin{abstract}
    We study continuous-time quantum walks on cycles. We prove two complementary results. Firstly, the cycle $C_9$ does not admit uniform mixing at any time. Using the similar idea and Dickson polynomials, we prove that $C_{15}$ does not admit uniform mixing at any time neither. Secondly, for every prime $p$, we show that the cycle $C_{p^2}$ admits $\epsilon$-uniform mixing.
\end{abstract}
\section{Introduction}
Quantum algorithms are drawing more researchers' attention nowadays. The development of efficient quantum algorithms has prompted the investigation of quantum walks, which were introduced by Farhi and Gutmann in 1998\cite{Farhi&Gutmann}. In this paper, we focus on the mixing properties of continuous-time quantum walks on cycles, especially in walks that reach uniform probability densities at particular times. We say such graphs admit \textit{uniform mixing}. More explicitly, let $X$ be a graph and $A$ be its adjacency matrix. Let $U(t)=\exp(itA)$ be the transition matrix. We say $X$ admits uniformly mixing at time $t$ if $U(t)$ is flat. Equivalently, since $U(t)$ is unitary (with inverse $U(-t)$), $X$ admits uniform mixing at $t$ if and only if
    $$|U(t)_{u,v}|^2=\frac{1}{n} \quad \text{for all $u,v \in V(X)$,}$$
    where $n$ is the number of vertices in $X$.\\
It was conjectured by Ahmadi, Belk, Tamon, and Wendler \cite{Ahmadi} that uniform mixing does not occur on the cycle of length $n$ if $n \ge 5$. In 2014, Godsil, Mullin, and Roy \cite{mullin17} showed that uniform mixing does not occur on $C_{2m}$ for $m \ge 3$ or $C_p$ for all primes $p \ge 5$. The next remaining case is $C_9$. In Theorem \ref{result1}, we prove that $C_9$ does not admit uniform mixing. Using a similar idea and the Dickson polynomials, we prove in Theorem \ref{result2} that $C_{15}$ does not admit uniform mixing.\\
We are also interested in which cycles come arbitrarily close to admitting uniform mixing. We say graphs of this property admit \textit{$\epsilon$-uniform mixing}. More explicitly, we say $X$ admits $\epsilon$-uniform mixing if and only if for any $\epsilon>0$, there exists some $t \in \RR$ such that
    $$\norm{U(t)\circ U(t)^*-\frac{1}{n}J}<\epsilon,$$
    where for any $A,B \in M_n(\CC)$,
    $$(A \circ B)_{k,l}=A_{k,l} \cdot B_{k,l},$$
    is the Schur product of matrices, and
    $$\norm{A-B}=\sqrt{\sum_{k=1}^n \sum_{l=1}^n |A_{k,l}-B_{k,l}|^2},$$
    is the Frobenius norm on matrices.\\
Godsil, Mullin, and Roy \cite{mullin17} showed that $C_{p}$ admits $\epsilon$-uniform mixing for all odd primes $p$. In this paper, we extend their work and prove that $C_{p^2}$ admits $\epsilon$-uniform mixing for all primes $p$ in Theorem \ref{result3}. We follow the mathematical notation given in \cite{chrisnotation}.

\section{Uniform Mixing}
Firstly, recall that the eigenvalues of a cycle $C_n$ have the form
$$\zeta_n^j+\zeta_n^{-j}, \quad j=0,\dots,n-1;$$
where $\zeta_n$ is the primitive $n$-th root of unity. Since we are working on cycles and therefore circulant matrices, we recall the following useful proposition.
\begin{proposition}\label{fourier}
    Let $C \in M_n(\CC)$ be a circulant matrix. Write
    $$C=\begin{pmatrix}
        c_0 & c_1 & \cdots & c_{n-1}\\
        c_{n-1} & c_0 & \cdots & c_{n-2}\\
        \vdots & \vdots & \ddots &\vdots\\
        c_1 & c_2 & \cdots & c_0
    \end{pmatrix}.$$
    Then the eigenvalues $\lambda_j$ of $C$ have the form
    $$\lambda_j=\sum_{k=0}^{n-1} c_k \zeta_n^{jk}.$$
    Dually, 
    $$c_j=\frac{1}{n}\sum_{k=0}^{n-1} \lambda_k \zeta_n^{-jk}.$$
\end{proposition}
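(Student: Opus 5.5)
The plan is to verify directly that the Fourier vectors diagonalize $C$, and then to invert the resulting linear system using orthogonality of the characters. Throughout, let $\zeta_n = e^{2\pi i/n}$. Reading off the matrix, the $(r,s)$ entry of $C$ is $c_{s-r}$, with indices taken modulo $n$.

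For each $j \in \{0,\dots,n-1\}$, define the vector $v_j \in \CC^n$ by $(v_j)_s = \zeta_n^{js}$. Computing the $r$-th entry of $Cv_j$ gives
$$(Cv_j)_r=\sum_{s} c_{s-r}\zeta_n^{js}.$$
Substituting $k=s-r \pmod n$ and using $\zeta_n^n=1$, this becomes
$$(Cv_j)_r=\zeta_n^{jr}\sum_{k=0}^{n-1} c_k\zeta_n^{jk}=\lambda_j (v_j)_r.$$
Hence $v_j$ is an eigenvector of $C$ with eigenvalue $\lambda_j=\sum_{k} c_k\zeta_n^{jk}$.

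Next we check that these are all the eigenvalues. The vectors $v_0,\dots,v_{n-1}$ are the columns of a Vandermonde matrix on the distinct nodes $\zeta_n^j$, so they are linearly independent. Alternatively, one can use the orthogonality relation
$$\sum_{s=0}^{n-1}\zeta_n^{(j-j')s}=n\,\delta_{j,j'},$$
which is just the geometric series for an $n$-th root of unity different from $1$. Either way, $C$ is diagonalized by the basis $\{v_j\}$, so $\lambda_0,\dots,\lambda_{n-1}$ is the full list of eigenvalues, counted with multiplicity.

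For the dual formula, we multiply the defining relation by $\zeta_n^{-jm}$ and sum over $m$:
$$\frac1n\sum_{m}\lambda_m\zeta_n^{-jm}=\frac1n\sum_k c_k\sum_m\zeta_n^{m(k-j)}.$$
By the same orthogonality relation, the inner sum equals $n\,\delta_{k,j}$, and so the right-hand side collapses to $c_j$.

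The only step needing care is the index bookkeeping modulo $n$. One must confirm that the stated matrix has $(r,s)$ entry $c_{s-r}$ rather than $c_{r-s}$. With the opposite convention, $\lambda_j$ would come out with $\zeta_n^{-jk}$ in place of $\zeta_n^{jk}$, which merely reindexes $j\mapsto -j$. This does not change the set of eigenvalues, but it does affect the exact formulas, so the convention should be fixed at the outset. Everything else is routine.
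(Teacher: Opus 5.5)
The paper gives no proof of this proposition; it is simply recalled as a standard fact about circulant matrices. Your argument is the standard one and it is correct. The convention that the $(r,s)$ entry is $c_{s-r}$ matches the displayed matrix, and the Fourier vectors $v_j$ are eigenvectors with eigenvalue $\sum_k c_k\zeta_n^{jk}$. The Vandermonde (or orthogonality) argument shows these vectors form a basis, and the dual formula is inversion of the discrete Fourier transform. All of this works for any primitive $n$-th root $\zeta_n$, not only $e^{2\pi i/n}$.

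One point in your favour: by attaching $\lambda_j$ to the specific eigenvector $v_j$, you pin down the labeling that the dual formula silently requires. This is exactly how the paper uses the proposition in Theorem \ref{char}, where the eigenvalue $\sqrt{n}z_j$ of $H=\sqrt{n}U(t)$ is the one on the $j$-th Fourier vector.
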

Now, we give a characterization of $C_n$ admitting uniform mixing for general $n$. We first prove the following lemma.
\begin{lemma} \label{vandermonde}
    Let $n$ be a natural number that is greater than 2, $\zeta=\zeta_n$ be the primitive $n$-th root of unity. If
    $$\sum_{l=1}^{n-1} C_l \zeta^{-lm}=0, \quad \text{for all } m \in \{0,\dots, n-2\},$$
    then $C_l=0, \quad \text{for all } l=0,\dots,n-1$.
\end{lemma}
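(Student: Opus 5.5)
The plan is to read the hypothesis as a square homogeneous linear system in the $n-1$ unknowns $C_1,\dots,C_{n-1}$ and show that its coefficient matrix is an invertible Vandermonde matrix. Set $x_l=\zeta^{-l}$ for $l=1,\dots,n-1$. The $n-1$ equations, indexed by $m=0,\dots,n-2$, then read
$$\sum_{l=1}^{n-1} x_l^{\,m}\, C_l = 0, \qquad m=0,\dots,n-2,$$
that is, $V\mathbf{c}=0$. Here $\mathbf{c}=(C_1,\dots,C_{n-1})^T$ and $V=(x_l^{\,m})_{m,l}$ is the $(n-1)\times(n-1)$ matrix whose $m$-th row consists of the $m$-th powers of the nodes $x_1,\dots,x_{n-1}$. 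This $V$ is the transpose of a standard Vandermonde matrix.

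The key step is that the nodes are pairwise distinct. Since $\zeta$ is a primitive $n$-th root of unity, $\zeta^{-l}=\zeta^{-l'}$ holds exactly when $l\equiv l' \pmod n$. For $1\le l,l'\le n-1$ this forces $l=l'$. The Vandermonde determinant formula then gives
$$\det V=\prod_{1\le l<l'\le n-1}\bigl(x_{l'}-x_l\bigr)\neq 0,$$
so $V$ is invertible and the only solution is $\mathbf{c}=0$. Hence $C_l=0$ for all $l=1,\dots,n-1$. The assumption $n>2$ only ensures that the system is nonempty; the argument would work for $n=2$ too.

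There is no real obstacle here. The one point needing comment is the index $l=0$ in the conclusion. The coefficient $C_0$ does not appear in the hypothesis, so nothing can be deduced about it from these equations alone. I would state the conclusion for $l=1,\dots,n-1$, which is what the linear algebra gives, and note that any claim about $C_0$ must come from the context in which the lemma is applied. In that context $C_0$ would typically be recovered from an additional relation, such as a normalization or the $m=n-1$ equation.
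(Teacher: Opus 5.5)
Your proof is correct and follows the paper's argument: you write the system as a (transposed) Vandermonde system in $C_1,\dots,C_{n-1}$ with the distinct nodes $\zeta^{-1},\dots,\zeta^{-(n-1)}$ and invert it. Your caveat about $C_0$ is also right: the paper's proof likewise only concludes $C_1=\dots=C_{n-1}=0$, and in Theorem~\ref{char} one has $C_0=n$ from the unimodularity of the $z_k$ (not from the $m=n-1$ equation, which does not involve $C_0$), so the ``$l=0$'' in the statement is a slip.
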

\begin{proof}
    The equations yield
    $$\begin{pmatrix}
        C_1&\cdots & C_{n-1} 
    \end{pmatrix}\underbrace{\begin{pmatrix}
        1 & \zeta^{-1} & \cdots & \zeta^{-(n-2)} \\
        \vdots &\vdots&\ddots &\vdots\\
        1 & \zeta^{-(n-1)} &\cdots & (\zeta^{-(n-1)})^{n-2}
    \end{pmatrix}}_{V}=(0,\dots,0).$$
    Notice $V$ is a Vandermonde matrix, and the $\zeta^{-j}$'s are distinct since we choose $\zeta$ to be a primitive root and therefore $\zeta^{-1}$ is also primitive. Hence $V$ is invertible, and we obtain
    $$(C_1,\dots,C_{n-1})=(0,\dots,0).$$
\end{proof}
\begin{theorem}\label{char}
    Let $\{\lambda_j=\zeta_n^j+\zeta_n^{-j}:j=0,\dots,n-1\}$ be the spectrum of $A=A(C_n)$. Let
    $$z_j:=e^{it\lambda_j}.$$
    Then $C_n$ admits uniform mixing if and only if 
    $$\sum_{k=0}^{n-1} \frac{z_{k+l}}{z_k}=0$$
    for any $l=1,\dots,n-1$.
\end{theorem}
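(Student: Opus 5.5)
The plan is to use Proposition~\ref{fourier} to write the entries of $U(t)$ in terms of the $z_j$, compute the squared moduli of those entries, and reduce flatness to a linear system in the quantities
$$S_l:=\sum_{k=0}^{n-1}\frac{z_{k+l}}{z_k},\qquad l=0,\dots,n-1,$$
which Lemma~\ref{vandermonde} then solves. Throughout, indices are read modulo $n$. This is consistent because $\lambda_j=\zeta_n^j+\zeta_n^{-j}$ depends only on $j \bmod n$.

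\textbf{Entries of $U(t)$.} Since $A=A(C_n)$ is a symmetric circulant matrix, $U(t)=\exp(itA)$ is a polynomial in $A$ and hence also circulant. Its eigenvalues are $z_j=e^{it\lambda_j}$, matched to the same Fourier eigenvectors as the $\lambda_j$. By the dual formula in Proposition~\ref{fourier}, the first-row entries of $U(t)$ are
$$c_m=\frac1n\sum_{k=0}^{n-1} z_k\zeta^{-mk},\qquad \zeta=\zeta_n .$$
Every entry of $U(t)$ equals some $c_m$, so $C_n$ admits uniform mixing at $t$ if and only if $|c_m|^2=1/n$ for all $m$.

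\textbf{Computing $|c_m|^2$.} Each $\lambda_j$ is real, so $|z_k|=1$ and $\overline{z_k}=1/z_k$. Expanding gives
$$|c_m|^2=\frac1{n^2}\sum_{k,k'} \frac{z_k}{z_{k'}}\,\zeta^{-m(k-k')}.$$
I will reindex with $k=k'+l$ modulo $n$. This is legitimate because both $z_k$ and $\zeta^{-mk}$ are $n$-periodic in $k$. The double sum becomes $\sum_l \zeta^{-ml} S_l$, and since $S_0=n$,
$$|c_m|^2=\frac1n+\frac1{n^2}\sum_{l=1}^{n-1} S_l\,\zeta^{-ml}.$$
Hence uniform mixing at $t$ is equivalent to
$$\sum_{l=1}^{n-1} S_l\zeta^{-ml}=0\qquad\text{for all } m\in\{0,\dots,n-1\}.$$

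\textbf{Both directions.} If every $S_l$ with $l\ge1$ vanishes, these equations hold trivially, so $C_n$ admits uniform mixing. Conversely, if $C_n$ admits uniform mixing, the equations hold in particular for $m=0,\dots,n-2$. Lemma~\ref{vandermonde}, applied with $C_l=S_l$, then gives $S_l=0$ for $l=1,\dots,n-1$.

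The only delicate point is the reindexing step. It needs the periodicity of $z_k$ and of $\zeta^{-mk}$ modulo $n$, and it needs the reduction $\overline{z_k}=z_k^{-1}$, which holds because the spectrum is real. Once those are in place, the argument is a direct calculation followed by the Vandermonde lemma.
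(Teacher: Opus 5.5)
Your proposal is correct and follows essentially the same route as the paper. Both arguments expand $|c_m|^2$ via the dual Fourier formula, isolate $S_0=n$, and apply Lemma~\ref{vandermonde} to the remaining $(n-1)\times(n-1)$ Vandermonde system; the only cosmetic difference is that the paper works with the rescaled matrix $H=\sqrt{n}\,U(t)$ and the condition $|c_m|=1$, while you work with $U(t)$ directly and the condition $|c_m|^2=1/n$.
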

\begin{proof}
    Notice that $A$ is circulant. So, if $f$ is analytic, $f(A)$ is also circulant. In particular, 
    $$U(t)=\exp(itA) \text{ is circulant.}$$
    Let $H:=\sqrt{n}U(t)$. Then $H$ is also circulant. Write the first row of $H$ to be
    $$\begin{pmatrix}
        c_0 & \cdots  & c_{n-1}
    \end{pmatrix}.$$
    $C_n$ admits uniform mixing if and only if $U(t)$ is flat with every entry having modulus $\frac{1}{\sqrt{n}}$, which is equivalent to $H$ is flat with every entry having modulus 1.\\
    Since $H=\sqrt{n}\exp(itA)$, and the eigenvalues for $A$ are $\lambda_j$'s, we know the eigenvalues for $H$ are
    $$\sqrt{n}e^{it\lambda_j}=\sqrt{n}z_j ,j=1,2,\dots,n.$$
    By Proposition \ref{fourier}, we have
    $$c_m=\frac{1}{n}\sum_{j=0}^{n-1} (\sqrt{n}z_j)\zeta_n^{-jm}=\frac{1}{\sqrt{n}} \sum_{j=0}^{n-1} z_j \zeta_n^{-jm}.$$
    Flatness of $H$ is equivalent to $|c_m|=1$ for all $m$, so
    \begin{align*}
        |c_m|^2&=\frac{1}{n}(\sum_{j=0}^{n-1} z_j \zeta_n^{-jm})(\sum_{k=0}^{n-1} \overline{z_k} \zeta_n^{km})\\
        &=\frac{1}{n} \sum_{j,k=0}^{n-1} z_j\overline{z_k} \zeta_n^{(k-j)m}\\
        &=\frac{1}{n} \sum_{l,k=0}^{n-1} z_{l+k}\overline{z_k} \zeta_n^{-lm}\\
        &=\frac{1}{n} \sum_{l=0}^{n-1} C_l \zeta_n^{-lm} \quad \text{, where }C_l=\sum_{k=0}^{n-1} z_{l+k}\overline{z_k}.
    \end{align*}
    Notice $z_k=e^{it\lambda_k}$ is unimodular, so
    $$C_0=\sum_{k=0}^{n-1} z_k\overline{z_k}=n.$$
    Therefore $|c_m|^2=1$ for all $m$ is equivalent to
    $$\sum_{l=1}^{n-1} C_l \zeta^{-lm}=0, \quad \text{for all } m \in \{0,\dots,n-1\}.$$
    In particular the equation holds for $m \in \{0,\dots,n-2\}$, so by Lemma \ref{vandermonde}, $C_l=0$, for all $l$ in $\{1,\dots,n-1\}$. Notice this satisfies the equation when $m=n-1$ as well. So we have
    $$\sum_{k=0}^{n-1} z_{l+k}\overline{z_k}=\sum_{k=0}^{n-1} \frac{z_{l+k}}{z_k}=0.$$
    for any $l=1,\dots, n-1$.
\end{proof}
\begin{theorem}\label{result1}
    $C_9$ does not admit uniform mixing.
\end{theorem}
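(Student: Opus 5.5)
The plan is to make Theorem~\ref{char} completely explicit for $n=9$ and to reduce uniform mixing to a small algebraic system in a few unimodular unknowns. Then I would show that this system cannot be realised by any real $t$. Write $\theta_j=2\pi j/9$ and $\lambda_j=2\cos\theta_j$. Since $\lambda_j=\lambda_{9-j}$, the unknowns $z_j=e^{it\lambda_j}$ collapse to five values:
\[
a=z_0=e^{2it},\quad b=z_3=z_6=e^{-it},\quad x=z_1=z_8,\quad y=z_2=z_7,\quad w=z_4=z_5 .
\]
Two relations hold among them. First, $a=b^{-2}$. Second, $\lambda_1+\lambda_2+\lambda_4=0$; this is because $\sum_{j\neq 0}\zeta_9^j=-1$ and $\zeta_9^3+\zeta_9^6=-1$. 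Hence $xyw=1$. Moreover, the condition for $l$ is the complex conjugate of the condition for $9-l$, because each sum $\sum_k z_{k+l}/z_k$ with $l$ replaced by $-l$ is conjugated. So only the four equations for $l=1,2,3,4$ need to be imposed.

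Next I would write these four equations out. Each becomes a real trigonometric identity once conjugate terms are paired. For example, $l=3$ gives
\[
1+2\cos 3t+2\cos t(\lambda_4-\lambda_1)+2\cos t(\lambda_4-\lambda_2)+2\cos t(\lambda_2-\lambda_1)=0 .
\]
The equations for $l=1,2,4$ are similar but mix the three "cubic" phases with $b$. The natural variables are $u=e^{-3it}=b^3$ and two independent ratios, say $p=x/y$ and $q=y/w$; all the quotients $z_{k+l}/z_k$ are Laurent monomials in $b$, $p$, $q$ (using $w=1/(xy)$). The aim is to eliminate variables, using resultants or a Gr\"obner basis computation over $\QQ(\zeta_9)$ together with the unimodularity constraints $\bar p=p^{-1}$ and so on. I expect this to show that the system has only finitely many solutions, and that in every solution each of $b$, $p$, $q$ is a root of unity or an explicitly determined algebraic number.

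Finally, I would rule these solutions out using the arithmetic of the exponents. Here $b=e^{-it}$ while $p=e^{it(\lambda_1-\lambda_2)}$ and $q=e^{it(\lambda_2-\lambda_4)}$. The numbers $1$, $\lambda_1-\lambda_2$ and $\lambda_2-\lambda_4$ lie in the real cubic field $\QQ(\cos(2\pi/9))$; the plan is to check that they are $\QQ$-linearly independent there. In particular, no nonzero rational multiple of $1$ equals $\lambda_1-\lambda_2$ or $\lambda_2-\lambda_4$. Suppose $b$ and $p$ are both roots of unity. Then $t\in\pi\QQ$ and $t(\lambda_1-\lambda_2)\in\pi\QQ$, which forces $\lambda_1-\lambda_2\in\QQ$; this is a contradiction unless $t=0$, and $t=0$ visibly fails the $l=3$ equation since the left side would equal $9$. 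If the elimination instead produces non-root-of-unity values, I would apply Gelfond--Schneider (or Lindemann--Weierstrass) to $e^{it}$ and $e^{it\alpha}$ with $\alpha$ algebraic irrational. This says the two cannot both be algebraic unless $e^{it}=1$.

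The main obstacle is the elimination step: showing that the four equations genuinely cut out a finite (zero-dimensional) set, and identifying that set explicitly. I would attempt it first by hand, using the $\ZZ_3$-symmetry $\zeta\mapsto\zeta^4$, which permutes $x,y,w$ cyclically (up to inverses) and fixes $a,b$. This should let me split the equations into parts that are invariant and parts that are twisted by a cube root of unity. If that does not reduce the system enough, I would fall back on a direct resultant computation in $b$, $p$, $q$.
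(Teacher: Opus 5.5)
Your route is the paper's route. You specialise Theorem~\ref{char} to $n=9$, use $\lambda_0=2$, $\lambda_3=-1$, $\lambda_1+\lambda_2+\lambda_4=0$ and the $l\leftrightarrow 9-l$ conjugation to reduce to the equations for $l=1,\dots,4$, eliminate, and finish with irrationality in $\QQ(\cos(2\pi/9))$.

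The gap is that the elimination, which carries the whole proof, is only announced (``I expect this to show that the system has only finitely many solutions''). Nothing you wrote excludes a positive-dimensional component of the solution set, and your endgame does not handle that case. The curve $t\mapsto(e^{-it},e^{it\lambda_1},e^{it\lambda_2})$ is dense in the $3$-torus, because $1,\lambda_1,\lambda_2$ are $\QQ$-independent, so a priori it could meet such a component at some $t\neq 0$.

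The paper supplies exactly this step. In your notation, put $X=b/x$, $Y=b/y$, $Z=b/w$. The four cleared equations then generate an ideal with Gr\"obner basis $\{X+Y+Z,\ Y^2+YZ+Z^2,\ Z^3-1\}$. Its six zeros all satisfy $X^3=Y^3=Z^3=XYZ=1$. Since $XYZ=b^3$, this gives $e^{-3it}=1$ and $x^3=b^3=1$. Hence $t=2\pi m/3$ with $m\lambda_1\in\ZZ$, which forces $m=0$.

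Two details of your set-up also need repair.

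First, your coordinates do not work as claimed. With $w=1/(xy)$ you have $q=xy^2$, so $p$ and $q$ only produce monomials $x^iy^j$ with $i+j\equiv 0\pmod 3$. Quotients such as $z_1/z_0=xb^2$ or $z_3/z_2=b/y$ are therefore not Laurent monomials in $b,p,q$. The right observation is that every $z_j$ has total degree $\equiv 1\pmod 3$ in $b,x,y$. So every quotient $z_{k+l}/z_k$ lies in the index-$3$ lattice generated by $b/x$, $b/y$, $b/w$, which is why the paper uses these as $X,Y,Z$.

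Second, unimodularity ($\bar p=p^{-1}$) is not a polynomial condition in the complex unknowns, so it cannot simply be adjoined to the ideal. It is also unnecessary: the four equations have integer coefficients and already generate a zero-dimensional ideal over $\QQ$. There is no need to work over $\QQ(\zeta_9)$ either.

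Your Gelfond--Schneider fallback, on the other hand, is sound and slightly more robust than the paper's ending. Once zero-dimensionality is known, $e^{-it}$ and $e^{it\lambda_1}$ are both algebraic, and Gelfond--Schneider forbids this unless $t=0$. So the explicit shape of the solutions is not even needed. The paper instead uses that the solutions are roots of unity, and then only needs $\lambda_1\notin\QQ$.
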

\begin{proof}
    Let $A$ be the adjacency matrix of $C_9$, which is the cycle of nine vertices. Let $U(t)=e^{itA}$ be the transition matrix. Suppose, for contradiction, that there exists some $t$ such that $U(t)$ is flat and each entry has modulus $\frac{1}{\sqrt{9}}=\frac{1}{3}$. Let $\zeta=\zeta_9$. Then the eigenvalues of $A$ are
    $$\lambda_j=\zeta^j+\zeta^{-j},\quad j\in \{0,\dots,8\}$$
    and the eigenvalues of $U(t)$ are
    $$z_j=e^{it\lambda_j}, j\in \{0,\dots,8\}.$$
    Notice
    $$\lambda_0=2,\lambda_3=-1, \lambda_1+\lambda_2+\lambda_4=0, \lambda_j=\lambda_{9-j}, \text{ for all } j \in \{1,\dots,8\}.$$
    Let $$q=e^{-it}, r=e^{it\lambda_1},s=e^{it\lambda_2}.$$ 
    Then
    $$(z_0,\dots,z_8)=(q^{-2},r,s,q,(rs)^{-1},(rs)^{-1},q,s,r).$$
    By Theorem \ref{char}, $C_9$ admits uniform mixing if and only if
    \begin{equation}
        \sum_{k=0}^{8} \frac{z_{k+l}}{z_k}=0, \label{1}
    \end{equation}
    for any $l=1,\dots,n-1$.
    Let $$X=\frac{q}{r},\quad  Y=\frac{q}{s},\quad Z=qrs, \quad T(u)=u+u^{-1}.$$ 
    Expending (\ref{1}) for $l=1,\dots,4$ gives
    $$\begin{cases}
        1+T(YZ)+T(X/Y)+T(Y)+T(Z)=0,\\
        1+T(XZ)+T(X)+T(Y/Z)+T(Z)=0,\\
        1+T(XYZ)+T(X/Z)+T(Y/Z)+T(X/Y)=0,\\
        1+T(XY)+T(X/Z)+T(Y)+T(X)=0.
    \end{cases}$$
    Cleaning the denominators yields
    $$\begin{cases}
        P_1:=XYZ+XY^2Z^2+X+X^2Z+Y^2Z+XY^2Z+XZ+XYZ^2+XY=0,\\
        P_2:=XYZ+X^2YZ^2+Y+X^2YZ+YZ+XZ^2+XY^2+XYZ^2+XY=0,\\
        P_3:=XYZ+X^2Y^2Z^2+1+YZ^2+X^2Y+XZ^2+XY^2+X^2Z+Y^2Z=0,\\
        P_4:=XYZ+X^2Y^2Z+Z+YZ^2+X^2Y+XY^2Z+XZ+X^2YZ+YZ=0.
    \end{cases}$$
    Let $I=\ip{P_1,\dots,P_4}$. Then we obtain the Gr\"obner basis of I:
    $$\{X+Y+Z,Y^2+YZ+Z^2,Z^3-1\}.$$
    Therefore we can reduce the system of equations $\{P_i=0,i=1,\dots,4\}$ to
    \[
    \begin{aligned}
    X+Y+Z&=0,\\
    Y^2+YZ+Z^2&=0,\\
    Z^3-1&=0.
    \end{aligned}
    \]
    From $Z^3-1=0$, we see $Z \ne 0$, so $Y^2+YZ+Z^2=0$ divided by $Z^2$ yields to
    $$\left(\frac{Y}{Z}\right)^2+\frac{Y}{Z}+1=0.$$
    Hence $\frac{Y}{Z} \in \{\zeta_3,\zeta_3^2\}$. Say $\frac{Y}{Z}=\gamma$. Then $Y=\gamma Z$. Then $X+Y+Z=0$ gives
    $$X=-Y-Z=-(\gamma+1)Z=\gamma^2Z.$$
    Therefore
    $$X^3=(\gamma^2Z)^3=\gamma^6Z^3=1,$$
    $$Y^3=(\gamma Z)^3=\gamma^3 Z^3=1,$$
    $$XYZ=(\gamma^2Z)(\gamma Z)Z=\gamma^3Z^3=1.$$
    On the other hand,
    $$XYZ=\frac{q}{r}\cdot\frac{q}{s}\cdot qrs=q^3.$$
    Hence $q^3=1$. Recall that $q=e^{-it}$, so
    $$q^3=1 \implies t=\frac{2\pi m}{3}, m \in \ZZ.$$
    Now, $X=\frac{q}{r}$, so $X^3=1$ gives $r^3=q^3=1$. Recall that $r=e^{it\lambda_1}$. So
    $$1=r^3=e^{i3t\lambda_1}=e^{i2\pi(m\lambda_1)}$$
    Therefore $m\lambda_1 \in \ZZ$. Since $\lambda_1=2\cos(2\pi /9)$, it is irrational, therefore $m=0$. However, for $t=2\pi m/3=0$, $U(t)=I$ is not flat. Hence uniform mixing cannot occur at any time $t$.
\end{proof}

\section{Dickson polynomial}
The \textit{Dickson polynomials of the first kind} are defined by the recurrence relation for $k \ge 2$,
$$D_{k}(x) = xD_{k-1}(x)-D_{k-2}(x),$$
with the initial condition $D_0(x)=2, D_1(x)=x$. Then we have $D_k(x) \in \ZZ[x]$ for every $k \in \ZZ_{\ge 0}$.\\
The following proposition is well-known and can be proved directly by induction.
\begin{proposition}
    For every positive integer $k$, we have
    $$D_k(x+x^{-1})=x^k+x^{-k}.$$
\end{proposition}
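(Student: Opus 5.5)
We prove the identity by strong induction on $k$, treating $x$ as an indeterminate (equivalently, a nonzero complex number) and working in the Laurent polynomial ring $\ZZ[x,x^{-1}]$. Throughout, put $y=x+x^{-1}$ and $E_k:=x^k+x^{-k}$. The claim is then $D_k(y)=E_k$ for all $k\ge 1$. We will in fact establish it for all $k\ge 0$, since this costs nothing and makes the induction cleaner.

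\textbf{Base cases.} For $k=0$ we have $D_0(y)=2=x^0+x^{-0}$. For $k=1$ we have $D_1(y)=y=x+x^{-1}$. Both hold directly from the initial conditions.

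\textbf{Key identity.} The inductive step rests on the observation that the sequence $E_k$ satisfies the same three-term recurrence as the Dickson polynomials. Expanding the product gives
\[
(x+x^{-1})(x^{k-1}+x^{1-k}) = x^k + x^{k-2} + x^{2-k} + x^{-k} = E_k + E_{k-2}.
\]
Rearranging, this says $E_k = yE_{k-1}-E_{k-2}$ for every $k\ge 2$.

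\textbf{Inductive step.} Assume $D_{j}(y)=E_{j}$ for $j=k-1$ and $j=k-2$. Since evaluation at $y$ is a ring homomorphism, it respects the recurrence, and therefore
\[
D_k(y)=yD_{k-1}(y)-D_{k-2}(y)=yE_{k-1}-E_{k-2}=E_k.
\]
This completes the induction.

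No step here poses a real obstacle. The only point needing care is to include $k=0$ among the base cases: this is what makes the recurrence applicable already at $k=2$, since with the convention $D_0=2$ the value $E_0=2$ is exactly the right one. If one prefers to prove only the statement for positive $k$, one can instead check $k=1$ and $k=2$ directly, using $D_2(y)=y^2-2=x^2+x^{-2}$, and then induct from $k\ge 3$.
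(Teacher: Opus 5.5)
Your proof is correct and is the argument the paper has in mind: the paper only remarks that the identity is well known and follows directly by induction. Your induction, with base cases $k=0,1$ and the identity $(x+x^{-1})(x^{k-1}+x^{1-k})=(x^k+x^{-k})+(x^{k-2}+x^{2-k})$, carries out exactly that induction in full.
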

Notice that the eigenvalues of cycle $C_n$ are
$$\lambda_j=\zeta_n^j+\zeta_n^{-j}=D_j(\lambda_1) \quad ,j=0,\dots,n-1,$$
where $\zeta_n$ is a primitive $n$-th root of unity. Hence we obtain a way to write $\lambda_j's$ in terms of $\lambda_1$. Since $[\mathbb{Q}(\lambda_1):\mathbb{Q}]=\frac{\phi(n)}{2}$, where $\phi$ is the Euler-phi function, and $\deg(D_j(x))=j$ with $D_j$ monic, we know that the algebraic extension $\QQ(\lambda_1)$ over $\mathbb{Q}$ has dimension $\frac{\phi(n)}{2}$ and has $\{1, \lambda_1,\lambda_2,\cdots,\lambda_{\frac{\phi(n)}{2}-1}\}$ as a basis. Thus every eigenvalue of $C_n$ is an integral linear combination of such a basis.
\begin{theorem} \label{result2}
    $C_{15}$ does not admit uniform mixing.
\end{theorem}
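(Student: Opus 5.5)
Suppose, for contradiction, that $C_{15}$ admits uniform mixing at some time $t$. We follow the template of Theorem \ref{result1}. First we write all eigenvalues in the integral basis coming from the Dickson polynomials. Since $\phi(15)/2=4$, the basis of $\QQ(\lambda_1)$ is $\{1,\lambda_1,\lambda_2,\lambda_3\}$. The remaining eigenvalues are $\lambda_4,\dots,\lambda_7$ (using $\lambda_j=\lambda_{15-j}$), and each can be expressed as an integer combination of this basis via $D_j$ and the minimal polynomial of $\lambda_1$. For instance:
\begin{itemize}
\item $\lambda_5=\zeta_3+\zeta_3^{-1}=-1$;
\item $\lambda_0=2$;
\item $\lambda_3+\lambda_6=\zeta_5$-sum $=-1$, which gives $\lambda_6=-1-\lambda_3$;
\item the relation $\sum_{j=1}^{7}\lambda_j=-1$ (since $\sum_{j=0}^{14}\zeta^j=0$), together with the Ramanujan-sum relation for the primitive ones, determines $\lambda_4$ and $\lambda_7$.
\end{itemize}
We then set $q=e^{-it}$, $r=e^{it\lambda_1}$, $s=e^{it\lambda_2}$, $w=e^{it\lambda_3}$. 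Every $z_j$ becomes a Laurent monomial in $q,r,s,w$, so the vector $(z_0,\dots,z_{14})$ is explicit, just as in the $C_9$ case.

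Next we apply Theorem \ref{char}. By the symmetry $z_j=z_{15-j}$, the conditions $\sum_k z_{k+l}/z_k=0$ for $l$ and $15-l$ are complex conjugates of each other. So only $l=1,\dots,7$ are needed. Each condition has the form $1+\sum T(\text{monomial})=0$. We choose convenient monomial variables, analogous to $X,Y,Z$ for $C_9$, clear denominators to get seven Laurent-polynomial equations, and compute a Gr\"obner basis of the resulting ideal, localized at the product of the variables so that they are units.

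The key step, and the main obstacle, is this Gr\"obner basis computation. It is four-variable and much larger than for $C_9$. What we hope it shows is that every solution is finite, with each variable a root of unity of order dividing $15$; by analogy with $C_9$, we expect relations like $X^{3}=1$ or $X^{15}=1$. If the full system is too heavy, we would first try the subsystem $l\in\{5,3\}$, where the structure reflects the $C_3$ and $C_5$ quotients, to force $q$ to a root of unity. We would then substitute back.

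Once $q^N=1$ and $r^N=1$ for some fixed $N$, we get $t\in \frac{2\pi}{N}\ZZ$, say $t=2\pi m/N$, and $e^{iNt\lambda_1}=1$ forces $m\lambda_1\in\ZZ$. But $\lambda_1=2\cos(2\pi/15)$ has degree $4$ over $\QQ$, so it is irrational. Hence $m=0$ and $t=0$, where $U(0)=I$ is not flat. This contradiction proves the statement. If the Gr\"obner basis instead yields some non-root-of-unity components, we would rule them out by the unimodularity of $q,r,s,w$: intersecting with $|x|=1$ and using $\bar x=x^{-1}$ adds the conjugate equations and should cut the variety down to the finite set above.
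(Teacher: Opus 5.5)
Your setup is the same as the paper's: the Dickson basis $\{1,\lambda_1,\lambda_2,\lambda_3\}$, each $z_j$ written as a Laurent monomial in four unimodular variables, Theorem~\ref{char} for $l=1,\dots,7$, and then a Gr\"obner basis. The gap is that the proof stops exactly where the content is. You never determine what the Gr\"obner basis says, and the outcome you predict is not what happens. In the variables $x=e^{it}$, $y=e^{it\lambda_1}$, $z=e^{it\lambda_2}$, $u=e^{it\lambda_3}$, the Gr\"obner basis of $\langle P_1,\dots,P_7\rangle$ contains
\[
z^3u^4+z^3u^3=z^3u^3(u+1), \qquad y^2u^3-2z^2u^7-4z^2u^6-2z^2u^5=y^2u^3-2z^2u^5(u+1)^2 .
\]
All variables are unimodular, hence nonzero. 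The first element therefore forces $u=-1$, and the second then gives $-y^2=0$, a contradiction. So the system has no solutions on the torus at all. There are no root-of-unity solutions and no relation like $X^3=1$, and the irrationality-of-$\lambda_1$ endgame borrowed from $C_9$ plays no role. Your text rests on an uncomputed result: ``what we hope it shows'', the fallback to the subsystem $l\in\{3,5\}$, and the claim that adjoining conjugate equations ``should'' cut the variety down. As written, it does not prove the theorem. The missing step is exhibiting ideal elements like the two above, which rule out every point with nonzero coordinates.

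There is also a smaller error in your last bullet: those relations do not determine $\lambda_4$ and $\lambda_7$. Given $\lambda_5=-1$ and $\lambda_3+\lambda_6=-1$, both $\sum_{j=1}^{7}\lambda_j=-1$ and the Ramanujan-sum relation $\lambda_1+\lambda_2+\lambda_4+\lambda_7=\mu(15)=1$ reduce to the single equation $\lambda_4+\lambda_7=1-\lambda_1-\lambda_2$. You need the Dickson reduction $D_4(x)\equiv x^3-4x+1 \pmod{x^4-x^3-4x^2+4x+1}$, that is, $\lambda_4=1-\lambda_1+\lambda_3$. From this, $\lambda_7=-\lambda_2-\lambda_3$ follows.
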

\begin{proof}
    Suppose, for contradiction, that $C_{15}$ admits uniform mixing at $t$. Let $$z_j:=e^{it\lambda_j}, j \in \{0,\dots,15\}$$
    be the eigenvalues of $U(t)=\exp(itA)$, where $A$ is the adjacency matrix of $C_{15}$ as above. Noticing that the minimal polynomial of $\lambda_1$ is
    $$p(x)=x^4-x^3-4x^2+4x+1.$$
    Denote
    $$x_0=e^{it},x_1=e^{it\lambda_1},x_2=e^{it(\lambda_1^2-2)},x_3=e^{it(\lambda_1^3-3\lambda_1)}.$$
    Then we obtain the following table ($z_{15-n}=z_n$ for any $n \in \{1,\dots,14\}$):
    \begin{center}
\begin{tabular}{|c|l|l|l|l|}
\hline
$n$ & $D_n(x) \pmod{p(x)}$ & $\lambda_n$ & $z_n$ & $\bar{z}_n$ \\ \hline
0 & $2$ & $2$ & $x_0^2$ & $x_0^{-2}$ \\
1 & $x$ & $\lambda_1$ & $x_1$ & $x_1^{-1}$ \\
2 & $x^2 - 2$ & $\lambda_1^2 - 2$ & $x_2$ & $x_2^{-1}$ \\
3 & $x^3 - 3x$ & $\lambda_1^3 - 3\lambda_1$ & $x_3$ & $x_3^{-1}$ \\
4 & $x^3 - 4x + 1$ & $\lambda_1^3 - 4\lambda_1 + 1$ & $x_0 x_1^{-1} x_3$ & $x_0^{-1} x_1 x_3^{-1}$ \\
5 & $-1$ & $-1$ & $x_0^{-1}$ & $x_0$ \\
6 & $-x^3 + 3x - 1$ & $-\lambda_1^3 + 3\lambda_1 - 1$ & $x_0^{-1} x_3^{-1}$ & $x_0 x_3$ \\
7 & $-x^3 - x^2 + 3x + 2$ & $-\lambda_1^3 - \lambda_1^2 + 3\lambda_1 + 2$ & $x_2^{-1} x_3^{-1}$ & $x_2 x_3$ \\ \hline
\end{tabular}
    \end{center}
    By Theorem \ref{char}, $C_{15}$ admits uniform mixing if and only if
    $$\sum_{k=0}^{14} \frac{z_{k+l}}{z_k}=\sum_{k=0}^{14}z_{k+l}\overline{z}_k=0$$
    for any $l=1,\dots,14$.
    Notice that for a fixed $l \in \{8,\dots,14\}$, we have
    $$0=\overline{\sum_{k=0}^{14} z_{k+l}\overline{z}_k}=\sum_{k=0}^{14} \overline{z}_{k+l}z_k=\sum_{j=0}^{14} z_{j-l}\overline{z}_{j}=\sum_{j=0}^{14}z_{j+(15-l)}\overline{z}_j.$$
    So it suffices to consider the equations with $l=1,\dots,7$. We obtain
    \begin{align*}
        P_1 : = {} & x^4zu^2 + x^4zu + x^3yu + x^3zu + x^2y^2u + x^2yz^2 \\
           & + x^2yzu^2 + x^2yzu + x^2yz + x^2yu^2 + x^2z^2u + xy^2zu + xyz^2u + y^2zu + y^2z = 0 \\
        P_2 : = {} & x^4yu^2 + x^4zu^4 + x^3yzu^3 + x^3yu^2 + x^3yu + x^3u^3 \\
           & + x^2y^2zu + x^2yzu^2 + x^2zu^3 + xy^2z^2u + xyz^2u^3 + xyz^2u^2 + xyzu + y^2z + yz^2u^2 = 0 \\
        P_3 : = {} & x^4y^2zu + x^3y^2z^2u^2 + x^3y^2zu^4 + x^3y^2u + x^3yz^2u^4 + x^3zu^3 
            + x^2y^3u^2 + x^2y^2zu^2\\& + x^2yz^2u^2 + xy^4zu + xy^3 + xy^2z^2u^3 
            + xy^2z + xy^2u^2 + y^2zu^3 = 0 \\
    \end{align*}
    \begin{align*}
P_4 : = {} & x^2y^2zu^2 + x^2y^2zu + x^2yz^2u^3 + x^2z^2u^4 + xy^2zu + xyz^2u^4 \\
           & + xyzu^3 + xyzu^2 + xyzu + xy + xzu^3 + y^2 + yu + zu^3 + zu^2 = 0 \\
P_5 : = {} & x^6y^2z^2u^2 + x^5yz^2u^4 + x^4y^3z^2u^3 + x^4z^2u^3 + x^3y^2z^4u^3  + x^3y^2z^3u^4 
+ x^3y^2z^3u \\&+ x^3y^2z^2u^2 + x^3y^2zu^3 + x^3y^2z + x^3y^2u + x^2y^4z^2u + x^2yz^2u + xy^3z^2 + y^2z^2u^2 = 0 \\
P_6 : = {} & x^6yz^2u^3 + x^5z^2u^3 + x^4y^2z^2u^2 + x^4yz^2u^4 + x^4zu^3 \\
           & + x^3y^2z^3u^3 + x^3yz^4u^3 + x^3yz^2u^2 + x^3yu + x^3zu + x^2y^2z^3u \\
           & + x^2yz^2 + x^2z^2u^2 + xy^2z^2u + yz^2u = 0 \\
P_7 : = {} & x^4yz^2u^2 + x^3y^2zu^2 + x^3yz^2u^2 + x^3yz^2u + x^3yzu^2 \\
           & + x^3zu + x^2y^2z^2u^2 + x^2yzu + x^2 + xy^2zu + xyz \\
           & + xyu + xy + xz + y = 0,
\end{align*}
where $x=x_0,y=x_1,z=x_2,u=x_3$. \\ Let $I=\ip{P_1,\dots,P_7}$. Then the Gr\"obner basis of $I$ is
\begin{align*}
     B_1 := {}&x^2 + xz - y^2u^2 - 2y^2u - y^2 + 3yzu^3 + 2yzu^2 - yzu +2yu^5 \\& +yu^4- yu^3 - yu^2 - yu + y+ z^3u^3 + z^3u^2- z^2u^7
    + z^2u^6 + 4z^2u^5 \\&+ 4z^2u^4 + z^2u^3 - z^2u^2 + 2zu^7 +3zu^6 - 2zu^4 - 2zu^3 - zu^2,\\
    B_2:= {} &
    xy + y^2u + y^2 - 2yzu^3 - 2yzu^2 - 3yu^5 - yu^4 + yu^3 + yu + 9z^2u^7 + 11z^2u^6 \\
    &- 4z^2u^4 - 2z^2u^3 - 3zu^7 - 4zu^6 + zu^4 + zu^3 + zu^2,\\
    B_3 := {} & xz^3u + y^2u^2 - yzu^3 - yzu^2 - 2z^3u^3 - 2z^3u^2 - 6z^2u^7 - 11z^2u^6 - 5z^2u^5 - z^2u^4 - z^2u^3,\\
    B_4 := {} & xzu^2 - y^2u^2 - y^2 - yzu^3 + yzu - 6yu^5 - 3yu^4 + yu^3 - yu^2 + \\
    &\frac{23}{2}z^2u^7 + 11z^2u^6 - \frac{1}{2}z^2u^5 + z^2u^3 + z^2u^2 - 6zu^7 - 9zu^6 - 2zu^5 - zu^3,\\
    B_5 := {} &  y^3 + y^2u^2 + 2z^2u^7 + 3z^2u^6 - z^2u^4,\\
    B_6 := {} & y^2z + y^2u^2 - yzu^3 - yzu^2 + yu^5 + yu^4 + 5z^2u^7 + 10z^2u^6  \\
    &+ 3z^2u^5 - 3z^2u^4 - z^2u^3 + zu^7 + 2zu^6 + zu^5,\\
    B_7:= {} &y^2u^3 - 2z^2u^7 - 4z^2u^6 - 2z^2u^5,\\
    B_8 := {} &yz^2u + yzu^3 + yzu^2 + z^3u^3 + z^3u^2 + 5z^2u^7 + 9z^2u^6 +  4z^2u^5 + z^2u^4 + z^2u^3,\\
    B_9 := {} &yzu^4 + yzu^3 - 3/2z^2u^7 - 2z^2u^6 + 1/2z^2u^5 + z^2u^4,\\
    B_{10} := {}&yu^6 + yu^5 + z^2u^7 + z^2u^6 + zu^8 + 2zu^7 + zu^6,\\
    B_{11} := {}&z^4u^3 + z^4u^2 - z^2u^7 - 2z^2u^6 - z^2u^5,\\
    B_{12} := {}&z^3u^4 + z^3u^3,\\
    B_{13} := {}&z^2u^8 + 2z^2u^7 + z^2u^6.
\end{align*}
Hence solving the system of $P_j=0, j\in\{1,\dots,7\}$ is equivalent to solving the system of $B_k,k \in \{1,\dots,13\}$. From $B_{12}=0$, we obtain $u=-1$. Plugging $u=-1$ back to $B_{7}=0$, we obtain $y=0$, contradicts to $y=e^{it\lambda_1}$ is unimodulus.
\end{proof}
\section{Spectra of $C_{p^m}$}
We first analyze the relationships between the eigenvalues of the adjacency matrix of $C_{p^m}$.
\begin{lemma} \label{spec}
    Let $p$ be an odd prime, $m$ be a positive integer, $\zeta$ be the primitive $p^m$-th root of unity, and $A=A(C_{p^m})$. Then $\spec(A)=\{\lambda_j:=\zeta^j+\zeta^{-j}, j=0,\dots,p^m-1\}$. Let
    $$B=\{kp^{m-1}:1 \le k \le \frac{p-1}{2}\} \cup \{a+kp^{m-1}:1 \le a \le \frac{p^{m-1}-1}{2}, 0 \le k \le p-2\}.$$
    Then $\{\lambda_b\}_{b \in B}$ is $\QQ$-linearly independent, and $\spec(A) \subseteq\text{span}_{\ZZ}\{\lambda_b:b \in B\}$.
\end{lemma}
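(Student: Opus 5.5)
The plan is to prove the spanning statement first, by a direct reduction using the cyclotomic relation, and then to get linear independence for free from a dimension count.

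\textbf{Spanning.} The starting point is the factorization $\Phi_{p^m}(x)=\sum_{k=0}^{p-1}x^{kp^{m-1}}$. Since $\Phi_{p^m}(\zeta)=0$, multiplying by $\zeta^{a}$ gives $\sum_{k=0}^{p-1}\zeta^{a+kp^{m-1}}=0$ for every integer $a$. Adding the same identity for $-a$ yields
$$\sum_{k=0}^{p-1}\lambda_{a+kp^{m-1}}=0 \quad\text{for every } a\in\ZZ.$$
Now take an arbitrary $\lambda_j$, $0\le j\le p^m-1$, and split into two cases.

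\emph{Case $p^{m-1}\nmid j$.} Let $a\equiv j \pmod{p^{m-1}}$. Using $\lambda_j=\lambda_{-j}$ (indices mod $p^m$), we may replace $j$ by $-j$ if necessary. This lets us assume $a\in\{1,\dots,\frac{p^{m-1}-1}{2}\}$, which is possible because $p^{m-1}$ is odd. Then $j=a+kp^{m-1}$ with $0\le k\le p-1$. If $k\le p-2$, then $j\in B$. If $k=p-1$, the displayed relation gives $\lambda_j=-\sum_{k=0}^{p-2}\lambda_{a+kp^{m-1}}$, which is a $\ZZ$-combination of elements of $B$.

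\emph{Case $j=kp^{m-1}$.} Symmetry $\lambda_{kp^{m-1}}=\lambda_{(p-k)p^{m-1}}$ handles $k>\frac{p-1}{2}$. For $k=0$, the relation with $a=0$ gives $2+2\sum_{k=1}^{(p-1)/2}\lambda_{kp^{m-1}}=0$, so $\lambda_0=2=-2\sum_{k=1}^{(p-1)/2}\lambda_{kp^{m-1}}$. This is again an integer combination.

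When $m=1$ the second part of $B$ is empty and only the second case occurs, so the argument still applies. Together the two cases prove $\spec(A)\subseteq \text{span}_{\ZZ}\{\lambda_b:b\in B\}$.

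\textbf{Independence.} First count $B$:
$$|B|=\frac{p-1}{2}+\frac{(p^{m-1}-1)(p-1)}{2}=\frac{(p-1)p^{m-1}}{2}=\frac{\phi(p^m)}{2}=[\QQ(\lambda_1):\QQ].$$
Next, the $\QQ$-span of $\spec(A)$ is all of $\QQ(\lambda_1)$. Indeed, $1=\lambda_0/2$, and by the binomial expansion $\lambda_1^k=(\zeta+\zeta^{-1})^k$ is an integer combination of the $\lambda_j$. Alternatively, one can invoke the remark before Theorem \ref{result2} that $\{1,\lambda_1,\dots,\lambda_{\phi(n)/2-1}\}$ is a basis. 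Since every $\lambda_j$ lies in the span of $\{\lambda_b\}_{b\in B}$, these $\frac{\phi(p^m)}{2}$ elements span a $\QQ$-space of dimension $\frac{\phi(p^m)}{2}$. Hence they are $\QQ$-linearly independent.

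The main thing to get right is the bookkeeping in the first case: checking that the reflection $j\mapsto -j$ maps residues $a$ mod $p^{m-1}$ in $\{\frac{p^{m-1}+1}{2},\dots,p^{m-1}-1\}$ onto the chosen range $\{1,\dots,\frac{p^{m-1}-1}{2}\}$. This is exactly where oddness of $p$ is used, and it ensures no index is missed or double counted. The dimension count then does the rest.
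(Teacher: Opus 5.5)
Your proof is correct, and it differs from the paper's in how it obtains independence.

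The spanning half is the same as the paper's. Both use $\sum_{k=0}^{p-1}\lambda_{a+kp^{m-1}}=0$, which comes from $\zeta^{a}\Phi_{p^m}(\zeta)=0$, together with $\lambda_j=\lambda_{-j}$. That handles the indices $a+(p-1)p^{m-1}$, the reflected indices, and $\lambda_0$.

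For independence, the paper argues directly. A vanishing rational combination gives $P(x)=\sum_{b\in B}q_b(x^b+x^{p^m-b})$ of degree less than $p^m$ with $P(\zeta)=0$, so $P=Q\,\Phi_{p^m}$ with $\deg Q<p^{m-1}$. Because $\Phi_{p^m}$ consists of monomials spaced $p^{m-1}$ apart, the coefficients of $P$ are constant along each residue class modulo $p^{m-1}$. The class of $0$ contains the exponent $0$. The class of each $a\in\{1,\dots,\frac{p^{m-1}-1}{2}\}$ contains $a+(p-1)p^{m-1}$. Neither exponent lies in $B\cup(p^m-B)$, so all $q_b$ vanish. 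The paper invokes $[\QQ(\lambda_1):\QQ]=|B|$ only afterwards, to upgrade independence to a basis.

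You reverse the logic. You prove spanning first and then get independence from the count $|B|=\phi(p^m)/2=[\QQ(\lambda_1):\QQ]$. You rightly note that only the inclusion $\QQ(\lambda_1)\subseteq \mathrm{span}_{\QQ}\{\lambda_b\}$ is needed, which follows from $1=\lambda_0/2$ and the binomial expansion of $\lambda_1^k$.

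Your route is shorter and avoids the coefficient bookkeeping entirely. However, it takes the degree of the real subfield $\QQ(\lambda_1)$ as an input. The paper's route needs only the irreducibility of $\Phi_{p^m}$, though the degree follows quickly from that. The paper's argument also shows why this particular $B$ works: each residue class modulo $p^{m-1}$ contains an exponent that $B\cup(p^m-B)$ deliberately omits.
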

\begin{proof}
    We first show $\{\lambda_b:b \in B\}$ is $\QQ$-linearly independent. Suppose, for contradiction, that there exists some $q_b \in \ZZ$ not all zero such that 
    $$\sum_{b \in B}q_b\lambda_b=0.$$
    Then we obtain
    $$\sum_{b \in B}\left(q_b\zeta^{b}+q_b\zeta^{p^m-b}\right)=0.$$
    Let $$P(x)=\sum_{b \in B} q_b(x^b+x^{p^m-b}).$$ Then $P(x)$ is a polynomial with integral coefficients and has degree at most $p^m-1$, as $0 \not \in B$, and $P(\zeta)=0$. Notice the minimal polynomial of $\zeta$ over $\QQ$ is
    $$\Phi_{p^m}(x)=x^{(p-1)p^{m-1}}+\dots+x^{p^{m-1}}+1.$$
    Therefore $\Phi_{p^m}|P$. Since $\deg(\Phi_{p^m})=(p-1)p^{m-1}$, we have
    $$P(x)=Q(x)\Phi_{p^m}(x),$$
    where $Q(x)\in \ZZ[x]$ has degree at most $(p^m-1)-((p-1)p^{m-1})=p^{m-1}-1$. Since $\deg(Q)<p^{m-1}$, and the degree gaps in $\Phi_{p^m}(x)$ are $p^{m-1}$, we have
    $$[x^{a}]P=\dots=[x^{a+(p-1)p^{m-1}}]P, \quad \text{for all $a=1,\dots,\frac{p^{m-1}-1}{2}$},$$
    and
    $$[x^{0}]P=\dots=[x^{\frac{p-1}{2}p^{m-1}}]P,$$
    where $[x^j]P$ denotes the coefficient of $x^j$ in $P(x)$. However, since $a+(p-1)p^{m-1} \not \in B$ and $a+(p-1)p^{m-1} \not \in p^m-B$ for any $1\le a \le \frac{p^{m-1}-1}{2}$, we conclude that
    $$[x^{a}]P=\dots=[x^{a+(p-1)p^{m-1}}]P=0, \quad \text{for all $a=1,\dots,\frac{p^{m-1}-1}{2}$}.$$
    Also, since $0 \not \in B$ and $0 \not \in p^m-B$, we conclude that
    $$0=[x^{0}]P=\dots=[x^{\frac{p-1}{2}p^{m-1}}]P.$$
    Thus, $q_b=0$ for all $b \in B$, contradicts to $q_b$'s not all zero.\\
    Since $[\mathbb{Q}(\lambda_1):\mathbb{Q}]=\frac{p^{m-1}(p-1)}{2}=|B|$. $\{\lambda_b: b\in B\}$ contributes a basis for $\mathbb{Q}(\lambda_1)$. In fact, we can provide the explicit formula of every eigenvalue of $A$ as an integral linear combination of $\{\lambda_b: b\in B\}$ as follows.
    We first notice that for any $a=0,\dots,p^{m-1}-1$,
    \begin{align*}
        \sum_{k=0}^{p-1} \lambda_{a+kp^{m-1}} &= \sum_{k=0}^{p-1} \zeta^{a+kp^{m-1}}+\zeta^{-a-kp^{m-1}}\\
        &=\zeta^a \sum_{k=0}^{p-1}\zeta^{kp^{m-1}}+\zeta^{-a}\sum_{k=0}^{p-1}\zeta^{-kp^{m-1}}\\
        &=\zeta^a \cdot 0 + \zeta^{-a} \cdot 0\\
        &=0.
    \end{align*}
    Let $a \in \{1,\dots,\frac{p^{m-1}-1}{2}\}$ be fixed. Then 
    \begin{equation*}
        \lambda_{a+(p-1)p^{m-1}}=-\sum_{k=0}^{p-2} \lambda_{a+kp^{m-1}} \in \mathrm{span}_{\ZZ}\{\lambda_b:b \in B\}.
    \end{equation*}
    Now, let $k \in \{0,\dots,p-1\}$ be fixed, we have
    $$\lambda_{(p^{m-1}-a)+(p-k-1)p^{m-1}}=\lambda_{p^m-(a+kp^{m-1})}=\lambda_{a+kp^{m-1}} \in \mathrm{span}_{\ZZ}\{\lambda_b:b \in B\}.$$
    Notice $p^{m-1}-a \in \{\frac{p^{m-1}+1}{2},\dots,p^{m-1}-1\}$ and $p-k-1 \in \{0,\dots, p-1\}$. Therefore
    $$\{\lambda_{a+kp^{m-1}}:1\le a \le p^{m-1}-1,0\le k\le p-1\} \subseteq \mathrm{span}_{\ZZ}\{\lambda_b:b \in B\}$$
    Also, for any $k \in \{1,\dots, \frac{p-1}{2}\}$, we have
    $$\lambda_{kp^{m-1}}=\lambda_{p^m-kp^{m-1}}=\lambda_{(p-k)p^{m-1}}.$$
    Since $k \in \{1,\dots, \frac{p-1}{2}\}$, we have $p-k \in \{\frac{p+1}{2},\dots,p-1\}$. Hence
    $$\lambda_{pk} \in \{\lambda_b:b \in B\} \quad \text{, for all $k=1,\dots,p-1$.}$$
    Moreover,
    $$\sum_{k=0}^{p-1}\lambda_{kp^{m-1}}=0,$$
    we have
    $$\lambda_0 = -\sum_{k=1}^{p-1}\lambda_{kp^{m-1}} \in \mathrm{span}_{\ZZ} \{\lambda_b:b \in B\}.$$
    Thus, we conclude that $\spec(A) \subseteq \mathrm{span}_{\ZZ}\{\lambda_b:b \in B\}$.
\end{proof}
The rest of the proof relies heavily on Kronecker's theorem. Here we follow the version in \cite{kronecker}. We say $t$ is a \textit{(topological) generator} of a compact torus $T$ if $t$ is an element of $T$, and the smallest closed subgroup of $T$ containing $t$ is $T$ itself.
\begin{theorem}[Kronecker's Theorem]
    Let $(t_1,\dots,t_r)$ denote an element of $\RR^r$, and let $t$ be the image of this point in $T=(\RR/\ZZ)^r$. Then $t$ is a generator of $T$ if and only if $\{1,t_1,\dots,t_r\}$ are linearly independent over $\QQ$.
\end{theorem}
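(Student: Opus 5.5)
We have to prove Kronecker's theorem as stated: the image $t$ of $(t_1,\dots,t_r)$ in $T=(\RR/\ZZ)^r$ generates $T$ if and only if $\{1,t_1,\dots,t_r\}$ is $\QQ$-linearly independent. The plan is to use Pontryagin duality for the torus: a closed subgroup $H\subseteq T$ is all of $T$ if and only if every continuous character trivial on $H$ is trivial.

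\textbf{Setup.} Recall that the continuous characters of $T$ are
$$\chi_n(x)=e^{2\pi i(n_1x_1+\dots+n_rx_r)},\qquad n\in\ZZ^r.$$
Let $H=\overline{\{kt:k\in\ZZ\}}$ be the closed subgroup generated by $t$. Since characters are continuous homomorphisms, $\chi_n$ is trivial on $H$ if and only if $\chi_n(t)=1$. This happens if and only if $\sum n_jt_j\in\ZZ$, i.e.\ $\sum n_jt_j=n_0$ for some $n_0\in\ZZ$. Such an integer relation with $(n_1,\dots,n_r)\neq 0$ is exactly, after clearing denominators, a nontrivial $\QQ$-linear dependence among $1,t_1,\dots,t_r$. (A relation with $n_1=\dots=n_r=0$ forces $n_0=0$ as well.) So the theorem reduces to the following claim: $H=T$ if and only if no nontrivial character annihilates $H$.

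\textbf{Easy direction.} Suppose $H=T$. Then any $\chi_n$ trivial on $H$ is trivial on $T$, which forces $n=0$. Hence $1,t_1,\dots,t_r$ are independent over $\QQ$.

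\textbf{Converse (the main step).} Suppose $H\neq T$. We must produce a nontrivial character vanishing on $H$. I would argue via the quotient: $T/H$ is a nontrivial compact abelian group. By Peter--Weyl, or more elementarily by Stone--Weierstrass applied to the continuous functions on $T/H$, it has a nontrivial continuous character. Pulling this back to $T$ gives a character $\chi_n$ with $n\neq0$ that is trivial on $H$.

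To avoid heavy machinery, an alternative is a Weyl-type averaging argument. Pick a continuous $f\ge0$ on $T$ supported in an open set disjoint from $H$, not identically zero. Consider the averages
$$\frac1N\sum_{k<N}f(kt).$$
These are $0$ for all $N$, since $kt\in H$ and $f$ vanishes on $H$. Now approximate $f$ uniformly by trigonometric polynomials (Fejér). If $\chi_n(t)\neq1$ for every $n\neq0$, then the character averages $\frac1N\sum_{k<N}\chi_n(kt)$ form a geometric sum and tend to $0$. It follows that the averages of $f$ tend to $\int f>0$, which contradicts their vanishing. Hence some $n\neq0$ has $\chi_n(t)=1$, giving the required dependence.

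\textbf{Main obstacle.} The obstacle is this converse step, namely exhibiting a character that detects a proper closed subgroup. I would use the Fejér/equidistribution argument just described, since it is self-contained and needs only uniform density of trigonometric polynomials on $T$.
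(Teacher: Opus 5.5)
The paper does not prove this statement. It quotes Kronecker's theorem from the cited source and uses it only to get density of $\{(m\lambda_b/2\pi)_{b\in B}: m\in\ZZ\}$, so there is no internal argument to compare with, and your proposal has to stand on its own. It does: the proof is correct.

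Your reduction is right. The closed subgroup generated by $t$ is $H=\overline{\{kt:k\in\ZZ\}}$, and $\chi_n$ is trivial on $H$ iff $\chi_n(t)=1$. An integer relation $\sum n_jt_j=n_0$ with $(n_1,\dots,n_r)\neq 0$ is equivalent to a nontrivial $\QQ$-relation among $1,t_1,\dots,t_r$.

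For the hard direction, your first route (a nontrivial character of the nontrivial compact group $T/H$, pulled back to $T$) is the standard proof, and it rests on Peter--Weyl. The parenthetical ``more elementarily by Stone--Weierstrass on $T/H$'' is not an independent argument as written. Applying Stone--Weierstrass to characters of $T/H$ presupposes that they separate points, which is exactly what is in question. To make it work, you would approximate a non-constant $H$-invariant function on $T$ by trigonometric polynomials, then average over $H$, which kills every $\chi_n$ that is nontrivial on $H$.

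Your preferred Weyl-averaging route is complete and needs none of this. $T\setminus H$ is a nonempty open set, so the bump $f$ exists and $\int f>0$. The geometric sums give $\frac1N\sum_{k<N}P(kt)\to\int P$ for every trigonometric polynomial $P$. An $\epsilon/3$ argument with a uniform approximant (Fej\'er on $T$) transfers this to $f$, contradicting $\frac1N\sum_{k<N}f(kt)=0$.

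Compared with the Peter--Weyl proof, this route needs only uniform density of trigonometric polynomials on $T$, and it proves more. Under the independence hypothesis, the forward orbit $\{kt:k\ge 0\}$ is equidistributed, not merely dense. In the paper's application to $C_{p^2}$, this means the approximating time $m$ can even be taken to be a positive integer.
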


\section{\texorpdfstring{$\epsilon$}{epsilon}-Uniform Mixing}
Let $A=A(C_{p^2})$ be the adjacency matrix of $C_{p^2}$, where $p$ is an odd prime. Then $A$ is circulant, and the eigenvalues of $A$ are $\{\lambda_j=\zeta^j+\zeta^{-j},j=0,\dots,p^2-1\}$, where $\zeta$ is the primitive $p^2$-th root of unity. Define $F$ to be a $p^2 \times p^2$ complex matrix with
$$F_{j,k}=\frac{1}{p}\zeta^{jk}.$$
Then the $j$-th column of $F$ is an eigenvector corresponding to $\lambda_j$. So we obtain
$$A=F \diag(\lambda_j)F^*.$$
Therefore
$$U(t)=\exp(itA)=F\diag(e^{it\lambda_j})F^*.$$
Now, for any $j \in \ZZ_{p^2}$, there exists unique $r,l \in \{0,\dots,p-1\}$ such that $j\equiv r+pl \pmod{p^2}$. Let $h \in \{0,\dots,p-1\}$ be such that $h \equiv 2^{-1} \pmod{p}$. Define
\begin{equation}
    \eta_j:=\zeta^{pr(l+h)}, \label{2}
\end{equation}
and
$$H:=F \diag(\eta_j)F^*.$$
\begin{proposition} \label{eta}
    For $\eta_j$ defined above, we have:
    \begin{enumerate}
        \item for any $j \in \ZZ_{p^2}$,
        $$\eta_j=\eta_{-j};$$
        \item for any $r \in \ZZ_p$,
        $$\prod_{l\in \ZZ_p} \eta_{r+pl}=1.$$
    \end{enumerate}
\end{proposition}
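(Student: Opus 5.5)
Both parts follow by computing exponents modulo $p^2$ in the definition (\ref{2}). Since $\eta_j=\zeta^{pr(l+h)}$ depends only on $pr(l+h)\bmod p^2$, it depends only on $r(l+h)\bmod p$. So we may treat $r$ and $l$ as residues modulo $p$, and the ambiguity in choosing representatives does not matter.

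For part 1, write $j\equiv r+pl \pmod{p^2}$ with $r,l\in\{0,\dots,p-1\}$. First suppose $r=0$. Then $-j\equiv p(-l)$, so the $r$-coordinate of $-j$ is again $0$, and $\eta_j=\eta_{-j}=1$. Now suppose $r\neq 0$. Then
$$-j\equiv (p-r)+p(p-l-1)\pmod{p^2},$$
because $(p-r)+p(p-l-1)=p^2-r-pl$. Hence $-j$ has $r$-coordinate $p-r\equiv -r$ and $l$-coordinate $p-1-l\equiv -1-l \pmod p$. Its exponent is therefore
$$p(-r)(-1-l+h)=pr(l+1-h).$$
The key identity is $1-h\equiv h\pmod p$, which holds because $2h\equiv 1$. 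Using it, the exponent becomes $pr(l+h)$, so $\eta_{-j}=\eta_j$. The only step needing care is the borrow: reducing $-j$ shifts $l$ to $-1-l$. This shift is exactly what the choice $h=2^{-1}$ is designed to absorb.

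For part 2, fix $r$. Then
$$\prod_{l\in\ZZ_p}\eta_{r+pl}=\zeta^{pr\sum_{l=0}^{p-1}(l+h)}.$$
The sum in the exponent is
$$\sum_{l=0}^{p-1}(l+h)=\frac{p(p-1)}{2}+ph.$$
For odd $p$ this is divisible by $p$, since $(p-1)/2$ is an integer. So the exponent is a multiple of $p^2$, and the product equals $1$.
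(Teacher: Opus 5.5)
Your proof is correct and follows the paper's argument: rewrite $-j$ as $(p-r)+p(p-l-1)$, reduce the exponent using $1-h\equiv h \pmod p$, and for part 2 note that $\sum_{l}(l+h)=p\,((p-1)/2+h)$ is divisible by $p$. Your separate treatment of $r=0$ is slightly more careful than the paper. For $r=0$ the paper's expression $(p-r)+p(p-l-1)$ has first coordinate $p$, which is not a valid representative in $\{0,\dots,p-1\}$, so it works only because the resulting exponent is still a multiple of $p^2$.
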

\begin{proof}
    \begin{enumerate}
        \item Write $j=r+pl$ with $r,l \in \ZZ_p$. Then 
        $$-j=-r-pl \equiv (p-r)+p(p-l-1) \pmod{p^2}$$
        Hence by definition (\ref{2}),
        $$\eta_{-j} = \zeta^{p(p-r)(p-l-1+h)}
            =\zeta^{pr(l+1-h)}.$$
        Since $h\equiv 2^{-1} \pmod{p}$ by construction, we have
        $$l+1-h \equiv l+h \pmod{p}.$$
        Hence
        $$pr(l+1-h) \equiv pr(l+h) \pmod{p^2}.$$
        Therefore,
        $$\eta_{-j}=\zeta^{pr(l+1-h)}=\zeta^{pr(l+h)}=\eta_{j}.$$
        \item Let $r \in \{0,\dots,p-1\}$ be fixed. Then
        $$\prod_{l=0}^{p-1} \eta_{r+pl} =\prod_{l=0}^{p-1} \zeta^{pr(l+h)}=\zeta^{\sum_{l=0}^{p-1} pr(l+h)}.$$
        Notice
        $$\sum_{l=0}^{p-1}pr(l+h)=p^2rh+pr\sum_{l=0}^{p-1}l=p^2r(h+\frac{p-1}{2}) \equiv 0 \pmod{p^2}.$$
        Since $\zeta$ is the primitive $p^2$-th root of unity, we have
        $$\prod_{l=0}^{p-1}\eta_{r+pl}=1.$$
    \end{enumerate}
\end{proof}
\begin{lemma} \label{flat-H}
    The matrix $H$ is a flat unitary matrix, and $|H_{x,y}|=\frac{1}{p}$ for all $x,y \in \ZZ_{p^2}$.
\end{lemma}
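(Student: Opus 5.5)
Unitarity comes for free. $F$ is unitary, since $F_{j,k}=\frac1p\zeta^{jk}$ with $p^2$ rows gives $F^*F=I$ by orthogonality of characters of $\ZZ_{p^2}$. Each $\eta_j$ is a root of unity, so $\diag(\eta_j)$ is unitary, and therefore $H=F\diag(\eta_j)F^*$ is unitary. What remains is to show $|H_{x,y}|=\frac1p$ for all $x,y$.

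Compute the entries directly:
$$H_{x,y}=\frac{1}{p^2}\sum_{j\in\ZZ_{p^2}}\eta_j\zeta^{j(x-y)}.$$
This depends only on $d=x-y$, as expected for a circulant. Write $j=r+pl$ with $r,l\in\{0,\dots,p-1\}$. Then $\zeta^{jd}=\zeta^{rd}\,\omega^{ld}$, where $\omega=\zeta^p$ is a primitive $p$-th root of unity, and $\eta_j=\omega^{r(l+h)}$. Hence
$$H_{x,y}=\frac{1}{p^2}\sum_{r=0}^{p-1}\zeta^{rd}\,\omega^{rh}\sum_{l=0}^{p-1}\omega^{l(r+d)}.$$
The inner sum over $l$ is a character sum. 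It equals $p$ when $r\equiv -d\pmod p$ and $0$ otherwise. So exactly one value of $r$ survives, namely $r_0\in\{0,\dots,p-1\}$ with $r_0\equiv -d \pmod p$, and
$$H_{x,y}=\frac1p\,\zeta^{r_0 d}\omega^{r_0h},$$
which has modulus $\frac1p$.

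The only delicate point is that $r$ enters both $\zeta^{rd}$ and $\omega^{rh}$. Here $r_0$ must be taken as the actual representative in $\{0,\dots,p-1\}$, not as a residue, because $\zeta^{r d}$ depends on $r$ modulo $p^2$. This causes no trouble for the modulus, since only a single term survives. Note also that $\omega^{lr}$ does depend only on $r$ modulo $p$, so the character-sum evaluation is valid.

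None of these steps is a real obstacle; the argument is a direct Fourier computation. Proposition~\ref{eta} is not needed for flatness. It presumably serves later, to realize $\diag(\eta_j)$ as a limit of $\diag(e^{it\lambda_j})$ via Lemma~\ref{spec} and Kronecker's theorem.
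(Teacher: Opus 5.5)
Your proposal is correct and follows the paper's proof essentially step for step: unitarity comes from $F$ and $\diag(\eta_j)$ being unitary, and then the splitting $j=r+pl$ makes the character sum over $l$ leave a single surviving $r$. You are right to insist that $r_0$ be the actual representative in $\{0,\dots,p-1\}$: the paper's closed form $\frac{1}{p}\zeta^{-ph(x-y)-(x-y)^2}$ silently substitutes $r=-(x-y)$, so it is off by the factor $\zeta^{kp(x-y)}$ where $r_0=-(x-y)+kp$ (e.g.\ for $p=3$, $x-y=1$ the true entry is $\frac13\zeta^5$, not $\frac13\zeta^2$), although the modulus $\frac1p$ is unaffected.
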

\begin{proof}
    We first notice that $F$ is unitary, since $F$ is a discrete Fourier transform matrix. Also, since $\eta_j=\zeta^{pr(l+h)}$, we have each $\eta_j$ is unimodular. Therefore
    \begin{align*}
        HH^*&=(F\diag(\eta_j)F^*)(F\diag(\eta_j)F^*)^*\\
        &=F\diag(\eta_j)F^*F\diag(\overline{\eta_j})F^*\\
        &=I.
    \end{align*}
    Similarly, $HH^*=I$ and hence $H$ is unitary.\\
    Notice that
    \begin{align*}
        H_{x,y} &= \sum_{j=0}^{p^2-1}F_{x,j} \eta_j (F^*)_{j,y}\\
        &=\frac{1}{p^2}\sum_{j=0}^{p^2-1} \eta_j \zeta^{(x-y)j}\\
        &=\frac{1}{p^2} \sum_{r,l=0}^{p-1} \zeta^{pr(l+h)+(x-y)(r+pl)}\\
        &=\frac{1}{p^2}\sum_{r=0}^{p-1} \zeta^{r(x-y)+prh}\big(\sum_{l=0}^{p-1} \zeta^{pl(r+(x-y))}\big).
    \end{align*}
    Since
    $$\sum_{l=0}^{p-1} \zeta^{pl(r+(x-y))}=\begin{cases}
        0 &\text{, if $r+(x-y) \not \equiv 0 \pmod{p}$,}\\
        p &\text{, if $r+(x-y)\equiv0 \pmod{p}$}.
    \end{cases}$$
    So
    $$H_{x,y} =\frac{1}{p} \zeta^{-ph(x-y)-(x-y)^2}.$$
    In particular,
    $$|H_{x,y}|=\frac{1}{p}=\sqrt{\frac{1}{p^2}},$$
    as desired.
\end{proof}
We cite Lemma 4.4.3 in \cite{natalie-thesis} due to Mullin.
\begin{lemma}
    Suppose that $A$ and $B$ are symmetric $n \times n$ complex matrices, such that
    $$\norm{A-B} \le \epsilon,$$
    for some positive real $\epsilon$. Then
    $$\norm{A \circ A^*-B\circ B^*}\le 2\epsilon.$$
\end{lemma}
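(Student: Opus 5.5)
The plan is to work entrywise, using the fact that for a symmetric matrix the Schur product $A\circ A^*$ is just the matrix of squared moduli. Since $A^T=A$, we have $(A^*)_{k,l}=\overline{A_{l,k}}=\overline{A_{k,l}}$, and therefore
$$(A\circ A^*)_{k,l}=A_{k,l}\overline{A_{k,l}}=|A_{k,l}|^2,$$
and likewise $(B\circ B^*)_{k,l}=|B_{k,l}|^2$. The claim then reduces to bounding
$$\sum_{k,l}\bigl(|A_{k,l}|^2-|B_{k,l}|^2\bigr)^2$$
in terms of $\sum_{k,l}|A_{k,l}-B_{k,l}|^2\le\epsilon^2$.

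Next I factor each term as
$$|A_{k,l}|^2-|B_{k,l}|^2=\bigl(|A_{k,l}|-|B_{k,l}|\bigr)\bigl(|A_{k,l}|+|B_{k,l}|\bigr).$$
By the reverse triangle inequality, $\bigl||A_{k,l}|-|B_{k,l}|\bigr|\le|A_{k,l}-B_{k,l}|$. If every entry of $A$ and $B$ has modulus at most $1$, then $|A_{k,l}|+|B_{k,l}|\le 2$. This gives
$$\bigl||A_{k,l}|^2-|B_{k,l}|^2\bigr|\le 2|A_{k,l}-B_{k,l}|.$$
Squaring, summing over $k,l$, and taking square roots yields $\norm{A\circ A^*-B\circ B^*}\le 2\norm{A-B}\le 2\epsilon$.

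The main obstacle is the factor $|A_{k,l}|+|B_{k,l}|\le 2$. As literally stated, the lemma cannot hold without some bound on the entries: replacing $A,B$ by $cA,cB$ scales the left-hand side by $c^2$ but the hypothesis only by $c$. So I will make explicit the hypothesis implicit in Mullin's setting, namely that the entries of $A$ and $B$ have modulus at most $1$. This holds automatically when $A$ and $B$ are unitary, since every row is a unit vector. This is exactly the situation where the lemma will be applied, with $A=U(t)$ and $B=H$ (both symmetric unitaries, as $F$ is symmetric and $H=F\diag(\eta_j)F^*$ with $\eta_j=\eta_{-j}$ by Proposition \ref{eta}). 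Under that assumption, the entrywise argument above completes the proof.
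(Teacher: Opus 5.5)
Your proof is correct, and your diagnosis of the statement is also correct. With only symmetry assumed the lemma is false: the $1\times 1$ matrices $A=(3)$, $B=(0)$ with $\epsilon=3$ give $\norm{A\circ A^*-B\circ B^*}=9>2\epsilon$. So some bound on the entries, such as unitarity, has to be added. That bound does hold where the lemma is used in Theorem \ref{result3}, since $U(t)$ and $H$ are both symmetric unitary matrices.

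The paper gives no proof of its own here, only a citation to Mullin's thesis, so there is no argument in the paper to compare yours against. A common way to write the argument is to split $A\circ\overline{A}-B\circ\overline{B}=A\circ\overline{(A-B)}+(A-B)\circ\overline{B}$. Each term is then bounded with $\norm{X\circ Y}\le\max_{k,l}|X_{k,l}|\,\norm{Y}$. This is the same entrywise estimate as your $\bigl||a|^2-|b|^2\bigr|\le(|a|+|b|)\,|a-b|$.

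Because you actually prove $\norm{A\circ A^*-B\circ B^*}\le 2\norm{A-B}$, you also get the strict inequality that the theorem uses. There, $\norm{U(t)-H}<\epsilon/2$ is turned into a bound $<\epsilon$.
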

Finally, we prove the result that $C_{p^2}$ admits $\epsilon$-uniform mixing.
\begin{theorem} \label{result3}
    Let $p$ be an arbitrary prime and $C_{p^2}$ be the cycle of length $p^2$. Then $C_{p^2}$ admits $\epsilon$-uniform mixing.
\end{theorem}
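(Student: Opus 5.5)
The plan is to approximate the flat unitary $H=F\diag(\eta_j)F^*$ of Lemma \ref{flat-H} by $U(t)=F\diag(e^{it\lambda_j})F^*$. By Kronecker's Theorem and Lemma \ref{spec}, we will choose $t$ so that $e^{it\lambda_j}\approx\eta_j$ for every $j$. Then the cited lemma of Mullin transfers closeness of the matrices to closeness of the Schur products.

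\textbf{Case $p=2$.} Here $C_4$ is known to admit exact uniform mixing (e.g.\ $C_4=K_2\square K_2$ at $t=\pi/4$). So we assume $p$ is odd and set $m=2$.

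\textbf{Choosing $t$.} By Lemma \ref{spec} the numbers $\{\lambda_b\}_{b\in B}$ are $\QQ$-linearly independent and every $\lambda_j$ is an integral combination $\lambda_j=\sum_{b}n_{j,b}\lambda_b$.
\begin{enumerate}
\item Since $\lambda_0=2$ is rational, we cannot take the times $t\lambda_b/2\pi$ together with $1$ as the Kronecker data directly.
\item Instead, consider the real numbers $\lambda_b/2\pi$ for $b\in B$, together with $1$. Is $\{1\}\cup\{\lambda_b/2\pi\}$ linearly independent over $\QQ$? Yes: a rational relation would force $\pi$ to be algebraic unless all coefficients of the $\lambda_b$ vanish, and then the coefficient of $1$ vanishes too.
\item Kronecker's Theorem then shows that the orbit $\{t(\lambda_b/2\pi)_b \bmod 1: t\in\ZZ\}$ is dense in the torus. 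It suffices to use integer $t$, since Kronecker's Theorem concerns the cyclic subgroup generated by one element.
\item Hence for any target angles $\theta_b$ and any $\delta>0$ there is $t$ with $|e^{it\lambda_b}-e^{i\theta_b}|<\delta$ for all $b\in B$.
\end{enumerate}

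\textbf{Consistency of the targets.} We choose $e^{i\theta_b}=\eta_b$. The main obstacle is to check that this forces $e^{it\lambda_j}\approx\eta_j$ for all $j$, not just for $b\in B$. This needs $\eta_j=\prod_b\eta_b^{n_{j,b}}$, i.e.\ that $j\mapsto\eta_j$ respects exactly the integral relations used in the proof of Lemma \ref{spec}. Those relations are of two kinds:
\begin{enumerate}
\item $\lambda_j=\lambda_{-j}$, which is matched by part 1 of Proposition \ref{eta};
\item $\sum_{k}\lambda_{a+kp}=0$, which is matched by part 2 of Proposition \ref{eta}, $\prod_l\eta_{r+pl}=1$.
\end{enumerate}
The derivation of each $\lambda_j$ in Lemma \ref{spec} uses only these relations. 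There is one rational eigenvalue, $\lambda_0=2$: it equals $-\sum_{k\ge1}\lambda_{kp}$, so it is also covered by relation 2 with $r=0$. Indeed $\eta_0=1$, and $\eta_{pl}=1$ for all $l$, which is consistent. Then $|e^{it\lambda_j}-\eta_j|\le \sum_b|n_{j,b}|\,\delta$, and since the coefficients $n_{j,b}$ are fixed integers this bound is at most $K\delta$ with $K$ independent of $t$.

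\textbf{Conclusion.} Since $F$ is unitary and the Frobenius norm is unitarily invariant,
$$\norm{U(t)-H}=\norm{\diag(e^{it\lambda_j}-\eta_j)}\le p\,K\delta.$$
Both $U(t)$ and $H$ are symmetric, because they are polynomials in the symmetric matrix $A$ (for $H$, symmetry follows from $\eta_j=\eta_{-j}$). Mullin's lemma therefore gives $\norm{U(t)\circ U(t)^*-H\circ H^*}\le 2pK\delta$. By Lemma \ref{flat-H}, $H\circ H^*$ has all entries equal to $1/p^2$, i.e.\ $H\circ H^*=\frac1{p^2}J$. Choosing $\delta<\epsilon/(2pK)$ completes the proof.
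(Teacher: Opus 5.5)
Your proposal is correct and follows essentially the same route as the paper: Kronecker's theorem applied to $\{1\}\cup\{\lambda_b/2\pi\}_{b\in B}$ with targets $\eta_b$, then propagation to every $j$ through the symmetry and fibre-sum relations that Proposition \ref{eta} matches multiplicatively, and finally Mullin's lemma together with $H\circ H^*=\frac{1}{p^2}J$. The only differences are cosmetic: you bound $\norm{U(t)-H}$ directly by unitary invariance of the Frobenius norm, which is slightly cleaner than the paper's entrywise triangle inequality, and you handle $C_4$ explicitly as $K_2\square K_2$ at $t=\pi/4$ rather than by citation.
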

\begin{proof}
    Firstly, if $p=2$, Ahmadi et al. showed that $C_4$ admits uniform mixing \cite{Ahmadi}, and hence admits $\epsilon$-uniform mixing. Now we may assume $p$ is an odd prime.\\
    Let $A=A(C_{p^2})$ and $H$ be defined as above. The goal is to show $e^{it\lambda_j}$ gets arbitrarily close to $\eta_j$ for any $j$ at some time $t$. By Lemma 0.3, $\{\lambda_b:b \in B\}$ is linearly independent over $\QQ$, we now claim $$\{1\} \cup \{\frac{\lambda_b}{2\pi}:b \in B\}$$
    is linearly independent over $\QQ$.\\
    \textit{proof of the claim:} Suppose that there exists rational $q_b:b \in B \cup \{0\}$ such that
        $$q_0 \cdot 1 + \sum_{b \in B}q_b \frac{\lambda_b}{2\pi}=0.$$
        Therefore
        $$-\sum_{b \in B} q_b \lambda_b = q_0\cdot 2\pi.$$
        Notice that $\lambda_b$'s are algebraic, hence
        $$-\sum_{b \in B}q_b\lambda_b$$
        is algebraic. Since $\pi$ is transcendental and $q_0$ is rational, we conclude $q_0=0$. Thus
        $$\sum_{b \in B}q_b\lambda_b=0$$
        Linear independence yields $q_b=0$, for all $b \in B$. This finishes up the proof of claim.\\
    Now, for each $b \in B$, we can find $\beta_b \in \RR/\ZZ$ such that
    $$\eta_b = e^{2\pi i\beta_b}.$$
    By Kronecker's theorem, we see that
    $$D=\bigg\{\left(\frac{\lambda_b}{2\pi}m\right)_{b \in B}: m \in \ZZ  \bigg\}$$
    is dense in $(\RR/\ZZ)^{|B|}$. Hence for any $\delta>0$, we can find some $m \in \ZZ$ such that for every $b \in B$,
    $$\big| m\frac{\lambda_b}{2\pi}-\beta_b \big|_{\RR/\ZZ}<\delta.$$
    We choose $t=m$. We aim to bound
    \begin{equation}
        \big|m\frac{\lambda_j}{2\pi}-\beta_j \big|_{\RR/\ZZ}.
    \end{equation}
    Notice that as the proof in Lemma \ref{spec}, for all $1 \le a \le p-1, 0\le k \le p-1$, we have
    $$\lambda_{a+kp}=\lambda_{p^2-(a+kp)}, \quad \lambda_{kp}=\lambda_{p^2-kp}.$$
    By Proposition \ref{eta}, for any $0 \le j \le p^2$, we have
    $$\eta_j=\eta_{p^2-j}.$$
    Thus, it suffices to bound (4) for
    $$j \in \{a+(p-1)p:1\le a \le \frac{p-1}{2}\} \cup \{0\}.$$
    When $j=0$, we notice that
    $$\eta_{kp}=\zeta^{0}=1 \quad \text{, for all $0\le k\le p-1$}$$
    Therefore we have
    $$\beta_{kp}=0 \quad \text{, for all $0\le k\le p-1$}$$
    Recall we have
    $$\lambda_0=-\sum_{k=1}^{p-1} \lambda_{pk}.$$
    Then
    \begin{align*}
        \tnorm{m\frac{\lambda_{0}}{2\pi}-\beta_0} &=\tnorm{-m\frac{\sum_{k=1}^{p-1} \lambda_{pk}}{2\pi}}\\
        &\le \sum_{k=1}^{p-1} \tnorm{m\frac{\lambda_{pk}}{2\pi}}\\
        &=\sum_{k=1}^{p-1} \tnorm{m\frac{\lambda_{pk}}{2\pi}-\beta_{pk}}\\
        &<(p-1)\delta.
    \end{align*}
    Now we bound the rest. Before bounding it, we first claim that for any $a \in \{1,\dots, p-1\}$,
    $$\sum_{k=0}^{p-1} \beta_{a+kp} \in \ZZ.$$
    \textit{proof of the claim:} Let $a \in \{1,\dots, p-1\}$ be fixed. Then we have
        $$\prod_{k=0}^{p-1}\eta_{a+kp}=1.$$
        Simplifying it with $\eta_{a+kp}=e^{2\pi i \beta_{a+kp}}$, we obtain
        $$e^{2\pi i \sum_{k=0}^{p-1} \beta_{a+kp}}=1.$$
        Therefore
        $$\sum_{k=0}^{p-1} \beta_{a+kp} \in \ZZ.$$
        Which finishes up the proof of the claim.\\
    We now have
    \begin{align*}
        \tnorm{m\frac{\lambda_{a+(p-1)
        p}}{2\pi}-\beta_{a+(p-1)p}} &= \tnorm{\sum_{k=0}^{p-2} \beta_{a+kp}-\sum_{k=0}^{p-2}m\frac{\lambda_{a+kp}}{2\pi}}\\
        &\le \sum_{k=0}^{p-2} \tnorm{m\frac{\lambda_{a+kp}}{2\pi}-\beta_{a+kp}}\\
        &<(p-1)\delta.
    \end{align*}
    Notice that if $\tnorm{x-y}<\gamma$, then $|e^{2\pi ix}-e^{2\pi iy}|<2\pi \gamma$. Hence
    $$\sum_{j=0}^{p^2-1} |e^{it\lambda_j}-\eta_j| <(p^2-p)\cdot 2\pi \delta+p\cdot (p-1)2\pi \delta=4(p^2-p)\pi\delta.$$
    Let $\epsilon>0$ be fixed and choose $\delta$ such that
    $$4(p^2-p)\pi\delta<\frac{\epsilon}{2}.$$
    Then
    \begin{align*}
        \sum_{x,y \in \ZZ_{p^2}} |(U(t)-H)_{x,y}|^2 &= \sum_{x,y \in \ZZ_{p^2}} \left\vert \frac{1}{p^2}\sum_{j=0}^{p^2-1}(e^{it\lambda_j}-\eta_j) \zeta^{(x-y)j}\right\vert ^2\\
        &\le \sum_{x,y \in \ZZ_{p^2}} \left(\frac{1}{p^2}\sum_{j=0}^{p^2-1}|e^{it\lambda_j}-\eta_j|\right)^2\\
        &<\sum_{x,y \in \ZZ_{p^2}} \frac{1}{p^4} (4(p^2-p)\pi\delta)^2\\
        &<(\frac{\epsilon}{2})^2.
    \end{align*}
    Therefore,
    $$\norm{U(t)-H}<\frac{\epsilon}{2}.$$
    By Lemma \ref{flat-H}, $|H_{x,y}|=\frac{1}{p}$ and $H$ is unitary. Also, $H$ is symmetric by definition. Hence
    $$\norm{U(t)\circ U(t)^*-H \circ H^*} < \epsilon.$$
    Since
    $$H \circ H^*=\frac{1}{p^2}J.$$
    We conclude that $C_{p^2}$ admits $\epsilon$-uniform mixing.
\end{proof}
\section{Future Problems}
Firstly, our results support the conjecture of Ahmadi et al. that no cycle $C_n$ other than $C_3$ and $C_4$ admits uniform mixing. It is desirable for us to push the result of $C_9$ does not admit uniform mixing to $C_{p^m}$. More generally, it might be feasible to use the characterization of $C_n$ admitting uniform mixing to show that if $C_{pq}$ admits uniform mixing, then either $p$ or $q$ admits uniform mixing.\\
Secondly, as in the proof of $C_9$ does not admit uniform mixing, we see that the potential mixing time $t$ gives $e^{it}$ a third root of unity. Mullin has a conjecture\cite{mullin17}:
\begin{conjecture}
    For any graph $X$, if it admits uniform mixing at $t$, then $e^{it}$ is a root of unity.
\end{conjecture}
\noindent Lastly, we already prove that $C_{p^2}$ admits $\epsilon$-uniform mixing for all primes $p$. However, the construction of $\eta_j$'s fails for $C_{p^m}$ in general when $m \ge 3$. We feel obliged to look at $\epsilon$-uniform mixing on $C_{27}$ before making any conjectures.
\section{Acknowledgement}
The authors would like to express sincere gratitude to Chris Godsil for his invaluable guidance, encouragement, and support throughout this work. The second author is also grateful to Ada Chan for many helpful discussions, thoughtful suggestions, and generous advice during the preparation of this paper.

\clearpage
\appendix

\section{Gr\"obner basis calculation}
For $C_9$, we use Sage to compute the Gr\"obner basis.
\begin{lstlisting}[language=Python, caption={Code used for the computation $C_9$.}]
R.<X,Y,Z> = PolynomialRing(QQ, order='lex')
P1 = X*Y*Z + X*Y^2*Z^2 + X + X^2*Z + Y^2*Z \
+ X*Y^2*Z + X*Z + X*Y*Z^2 + X*Y
P2 = X*Y*Z + X^2*Y*Z^2 + Y + X^2*Y*Z + Y*Z \
+ X*Z^2 + X*Y^2 + X*Y*Z^2 + X*Y
P3 = X*Y*Z + X^2*Y^2*Z^2 + 1 + Y*Z^2 + X^2*Y \
+ X*Z^2 + X*Y^2 + X^2*Z + Y^2*Z
P4 = X*Y*Z + X^2*Y^2*Z + Z + Y*Z^2 + X^2*Y \
+ X*Y^2*Z + X*Z + X^2*Y*Z + Y*Z
I = ideal([P1,P2,P3,P4])
I.groebner_basis()
\end{lstlisting}
The output is
\begin{lstlisting}
[X + Y + Z, Y^2 + Y*Z + Z^2, Z^3 - 1]
\end{lstlisting}
For $C_{15}$, we use MAGMA to compute the Gr\"obner basis.
\begin{lstlisting}[language=Python, caption={Code used for the computation $C_{15}$.}]
K := RationalField();

F<x,y,z,u> := PolynomialRing(K, 4, "lex");

P1 := x^4*z*u^2 + x^4*z*u + x^3*y*u + x^3*z*u
    + x^2*y^2*u + x^2*y*z^2 + x^2*y*z*u^2 + x^2*y*z*u
    + x^2*y*z + x^2*y*u^2 + x^2*z^2*u + x*y^2*z*u
    + x*y*z^2*u + y^2*z*u + y^2*z;

P2 := x^4*y*u^2 + x^4*z*u^4 + x^3*y*z*u^3 + x^3*y*u^2
    + x^3*y*u + x^3*u^3 + x^2*y^2*z*u + x^2*y*z*u^2
    + x^2*z*u^3 + x*y^2*z^2*u + x*y*z^2*u^3
    + x*y*z^2*u^2 + x*y*z*u + y^2*z
    + y*z^2*u^2;

P3 := x^4*y^2*z*u + x^3*y^2*z^2*u^2 + x^3*y^2*z*u^4
    + x^3*y^2*u + x^3*y*z^2*u^4 + x^3*z*u^3
    + x^2*y^3*u^2 + x^2*y^2*z*u^2 + x^2*y*z^2*u^2
    + x*y^4*z*u + x*y^3 + x*y^2*z^2*u^3 + x*y^2*z
    + x*y^2*u^2 + y^2*z*u^3;

P4 := x^2*y^2*z*u^2 + x^2*y^2*z*u + x^2*y*z^2*u^3
    + x^2*z^2*u^4 + x*y^2*z*u + x*y*z^2*u^4
    + x*y*z*u^3 + x*y*z*u^2 + x*y*z*u + x*y
    + x*z*u^3 + y^2 + y*u + z*u^3 + z*u^2;

P5 := x^6*y^2*z^2*u^2 + x^5*y*z^2*u^4
    + x^4*y^3*z^2*u^3 + x^4*z^2*u^3
    + x^3*y^2*z^4*u^3 + x^3*y^2*z^3*u^4
    + x^3*y^2*z^3*u + x^3*y^2*z^2*u^2
    + x^3*y^2*z*u^3 + x^3*y^2*z + x^3*y^2*u
    + x^2*y^4*z^2*u + x^2*y*z^2*u + x*y^3*z^2
    + y^2*z^2*u^2;

P6 := x^6*y*z^2*u^3 + x^5*z^2*u^3
    + x^4*y^2*z^2*u^2 + x^4*y*z^2*u^4
    + x^4*z*u^3 + x^3*y^2*z^3*u^3
    + x^3*y*z^4*u^3 + x^3*y*z^2*u^2
    + x^3*y*u + x^3*z*u + x^2*y^2*z^3*u
    + x^2*y*z^2 + x^2*z^2*u^2
    + x*y^2*z^2*u + y*z^2*u;

P7 := x^4*y*z^2*u^2 + x^3*y^2*z*u^2
    + x^3*y*z^2*u^2 + x^3*y*z^2*u
    + x^3*y*z*u^2 + x^3*z*u
    + x^2*y^2*z^2*u^2 + x^2*y*z*u + x^2
    + x*y^2*z*u + x*y*z + x*y*u + x*y + x*z + y;

B := [P1, P2, P3, P4, P5, P6, P7];

I := ideal<F | B>;

G := GroebnerBasis(I);

G;
\end{lstlisting}
The output is
\begin{lstlisting}
    [
    x^2 + x*z - y^2*u^2 - 2*y^2*u - y^2 + 3*y*z*u^3 + 2*y*z*u^2 - y*z*u +
        2*y*u^5 + y*u^4 - y*u^3 - y*u^2 - y*u + y + z^3*u^3 + z^3*u^2 - z^2*u^7
        + z^2*u^6 + 4*z^2*u^5 + 4*z^2*u^4 + z^2*u^3 - z^2*u^2 + 2*z*u^7 +
        3*z*u^6 - 2*z*u^4 - 2*z*u^3 - z*u^2,
    x*y + y^2*u + y^2 - 2*y*z*u^3 - 2*y*z*u^2 - 3*y*u^5 - y*u^4 + y*u^3 + y*u +
        9*z^2*u^7 + 11*z^2*u^6 - 4*z^2*u^4 - 2*z^2*u^3 - 3*z*u^7 - 4*z*u^6 +
        z*u^4 + z*u^3 + z*u^2,
    x*z^3*u + y^2*u^2 - y*z*u^3 - y*z*u^2 - 2*z^3*u^3 - 2*z^3*u^2 - 6*z^2*u^7 -
        11*z^2*u^6 - 5*z^2*u^5 - z^2*u^4 - z^2*u^3,
    x*z*u^2 - y^2*u^2 - y^2 - y*z*u^3 + y*z*u - 6*y*u^5 - 3*y*u^4 + y*u^3 -
        y*u^2 + 23/2*z^2*u^7 + 11*z^2*u^6 - 1/2*z^2*u^5 + z^2*u^3 + z^2*u^2 -
        6*z*u^7 - 9*z*u^6 - 2*z*u^5 - z*u^3,
    y^3 + y^2*u^2 + 2*z^2*u^7 + 3*z^2*u^6 - z^2*u^4,
    y^2*z + y^2*u^2 - y*z*u^3 - y*z*u^2 + y*u^5 + y*u^4 + 5*z^2*u^7 + 10*z^2*u^6
        + 3*z^2*u^5 - 3*z^2*u^4 - z^2*u^3 + z*u^7 + 2*z*u^6 + z*u^5,
    y^2*u^3 - 2*z^2*u^7 - 4*z^2*u^6 - 2*z^2*u^5,
    y*z^2*u + y*z*u^3 + y*z*u^2 + z^3*u^3 + z^3*u^2 + 5*z^2*u^7 + 9*z^2*u^6 +
        4*z^2*u^5 + z^2*u^4 + z^2*u^3,
    y*z*u^4 + y*z*u^3 - 3/2*z^2*u^7 - 2*z^2*u^6 + 1/2*z^2*u^5 + z^2*u^4,
    y*u^6 + y*u^5 + z^2*u^7 + z^2*u^6 + z*u^8 + 2*z*u^7 + z*u^6,
    z^4*u^3 + z^4*u^2 - z^2*u^7 - 2*z^2*u^6 - z^2*u^5,
    z^3*u^4 + z^3*u^3,
    z^2*u^8 + 2*z^2*u^7 + z^2*u^6
]
\end{lstlisting}
\end{document}